\newcommand{\rrvert}{\vert}
\newcommand{\llvert}{\vert}
\renewcommand{\emptyset}{\varnothing}
\newtheorem{theorem}[definition]{Theorem}
\newtheorem{corollary}[definition]{Corollary}
\newcommand{\N}{\mathbb{N}}
\newcommand{\Z}{\mathbb{Z}}
\newcommand{\Q}{\mathbb{Q}}
\newcommand{\R}{\mathbb{R}}
\newcommand{\I}{\mathbb{I}}
\newcommand{\pr}{\mathbb{P}}
\newcommand{\prq}{\mathbb{Q}}
\newcommand{\ex}{\mathbb{E}}
\newcommand{\eins}{\mathbh{1}}
\newcommand{\T}{\mathbf{T}}
\begin{document}
\begin{frontmatter}

\title{Qualitative robustness of statistical functionals under strong mixing}
\runtitle{Qualitative robustness of statistical functionals}

\begin{aug}
%%%% inicialai - be tarpu
\author[1]{\inits{H.}\fnms{Henryk}~\snm{Z\"ahle}\corref{}\ead[label=e1]{zaehle@math.uni-sb.de}}% \and
%\author{\inits{}\fnms{}~\snm{}\thanksref{}\ead[label=e2]{}}
%\author{\inits{}\fnms{}~\snm{}}
%%\runauthor{} %% auto
%\dedicated{}
\address[1]{Department of Mathematics, Saarland University, Postfach
151150, D-66041 Saarbr\"{u}cken, Germany.\\ \printead{e1}}
%\address[]{}
\end{aug}

% HISTORY:
\received{\smonth{2} \syear{2012}}
\revised{\smonth{1} \syear{2013}}

% ABSTRACT
%
\begin{abstract}
A new concept of (asymptotic) qualitative robustness for plug-in
estimators based on identically distributed possibly \emph{dependent}
observations is introduced, and it is shown that Hampel's theorem for
general metrics $d$ still holds. Since Hampel's theorem assumes the UGC
property w.r.t. $d$, that is, convergence in probability of the
empirical probability measure to the true marginal distribution w.r.t.
$d$ uniformly in the class of all admissible laws on the sample path
space, this property is shown for a large class of strongly mixing laws
for three different metrics $d$. For real-valued observations, the UGC
property is established for both the Kolomogorov $\phi$-metric and the
L\'{e}vy $\psi$-metric, and for observations in a general locally
compact and second countable Hausdorff space the UGC property is
established for a certain metric generating the $\psi$-weak topology.
The key is a new uniform weak LLN for strongly mixing random variables.
The latter is of independent interest and relies on Rio's maximal inequality.
\end{abstract}

% KEYWORDS
% visi is mazosios raides ir pagal abecele
%
\begin{keyword}
\kwd{$\psi$-weak topology}
\kwd{function bracket}
\kwd{Hampel's theorem}
\kwd{Kolmogorov $\phi$-metric}
\kwd{L\'{e}vy $\psi$-metric}
\kwd{locally compact and second countable Hausdorff space}
\kwd{plug-in estimator}
\kwd{qualitative robustness}
\kwd{Rio's maximal inequality}
\kwd{strong mixing}
\kwd{uniform Glivenko--Cantelli theorem}
\kwd{uniform weak law of large numbers}
\end{keyword}

\end{frontmatter}

%s1 #&#
\section{Introduction}\label{Introduction}

Let $\mathcal{M}$ be a class of probability measures on some
measurable space $E$, and $T$ be a mapping (statistical functional)
from $\mathcal{M}$ into a measurable space $\T$. Let $(X_i)_{i\in\N
}$ be a sequence of $E$-valued random elements (observations) being
identically distributed according to $\mu\in\mathcal{M}$. If
$\widehat m_n=\frac{1}{n}\sum_{i=1}^n\delta_{X_i}$ denotes the
empirical distribution of $X_1,\ldots,X_n$, then $T(\widehat m_n)$ can
provide a reasonable estimator for $T(\mu)$. Informally, the sequence
$(T(\widehat m_n))$ is qualitatively robust when for large $n$ a small
change in $\mu$ results only in a small change of the law of the
estimator $T(\widehat m_n)$. More precisely, given a subset $\mathcal
{P}_1\subset\mathcal{M}$, the sequence of estimators $(T(\widehat
m_n))$ is said to be qualitatively $\mathcal{P}_1$-robust at $\mu\in
\mathcal{P}_1$ if for every $\varepsilon>0$ there are some $\delta
>0$ and $n_0\in\N$ such that
%
%e1 #&#
\begin{equation}
\label{def qual rob in intro} \nu\in\mathcal{P}_1,\qquad d(\mu,\nu)\le\delta\quad
\Longrightarrow\quad d'\bigl(\operatorname{law}\bigl\{T(\widehat
m_n)|\mu\bigr\},\operatorname{law}\bigl\{T(\widehat m_n)|\nu
\bigr\}\bigr)\le\varepsilon\qquad\forall n\ge n_0,
\end{equation}
where $d$ and $d'$ are metrics on $\mathcal{P}_1$ and on the class of
all probability measures on $\T$, respectively. The basic concept of
qualitative robustness was initiated by Hampel \cite
{Hampel1968,Hampel1971}, but the above version of qualitative
robustness is due to Huber \cite{Huber1981}. Huber's version is also
called asymptotic robustness (cf. \cite{PapantoniKazakosGray1979}) and
differs from the original definition in \cite{Hampel1968,Hampel1971}
in that the right-hand side in (\ref{def qual rob in intro}) is not
required to hold for all $n\in\N$ but only for all $n\ge n_0$ for
some $n_0=n_0(\varepsilon)$. For background see also \cite
{Cuevas1988,Hampeletal1986,Huber1981,HuberRonchetti2009,Kraetschmeretal2012a,Mizera2010,Rieder1994}
and references therein.

The definition of qualitative robustness stated above was introduced by
Hampel and Huber to capture the case of independent observations. To
capture also the case of dependent observations, various authors
departed from this definition and considered, instead of a metric $d$
on a class of probability measures on $E$, a metric $d_\N$ on a class
of probability measures on the infinite product space $E^{\N}$ or a
sequence of pseudo-metrics $(d_n)$ on classes of probability measures
on $E^n$, $n\in\N$; cf. \cite
{Boenteatal1987,Bustos1981,Cox1981,Hampel1971,PapantoniKazakosGray1979}.
However, in the usual situation where one is interested in the
estimation of an aspect $T(\mu)$ of the marginal distribution $\mu$
based on the observations $X_1,\ldots,X_n$, and where the contaminated
observations are still identically distributed (according to some $\nu
$ close to $\mu$), it might be also reasonable to retain the original
definition. After all, for increasing sample size $n$ the impact of the
data dependence on the estimation often declines. So one might hope
that if the dependence structures induced by the ``admissible''
probability measures on $E^{\N}$ (which play the role of the laws of the
sequences of identically distributed observations) are subject to a
common constraint, then the implication (\ref{def qual rob in intro})
still holds for the class $\mathcal{P}_1$ of the marginal
distributions. In other words, under a common constraint for the
dependence structures, it might be sufficient to ensure a small
distance between the marginal distributions $\mu$ and $\nu$ in order
to obtain a small distance between the distributions of the plug-in
estimators, based on large $n$, under two ``admissible'' laws on $E^{\N}
$ with marginal distributions $\mu$ and $\nu$, respectively.

In this article, we will demonstrate that the latter is in fact true
under fairly weak constraints for the dependence structures. In Section~\ref{section def m robustness}, we adapt Huber's definition of
qualitative robustness to the case of dependent observations and
establish the analogue of Hampel's theorem. Since the latter assumes
the UGC property, that is, convergence in probability of the empirical
probability measure to the true marginal distribution uniformly in the
class of all ``admissible'' laws on $E^{\N}$ (cf. Definition~\ref{def
UGC property} below), this property will be established for a large
class of strongly mixing laws on $E^{\N}$ for three different metrics.
In Section~\ref{UGC weighted kolmogorov}, we consider real-valued
observations ($E=\R$) and verify the UGC property for both the
Kolomogorov $\phi$-metric and the L\'{e}vy metric. In Section~\ref{UGC psi weak topology}, we assume observations in a general locally
compact and second countable Hausdorff space $E$ and verify the UGC
property for a certain metric generating the $\psi$-weak topology. For
both examples the key is a new uniform weak LLN for strongly mixing
random variables which is given in the Appendix~\ref{appendix rio} and
which relies on Rio's maximal inequality.

It should be stressed that for the considerations of this article it is
essential that the definition of qualitative robustness (Definition~\ref{def quali rob} below) is in line with Huber's version of
qualitative robustness \cite{Huber1981}, that is, with asymptotic
robustness. Indeed, from Example~1.18 in \cite{Bradley2007} it is
easily seen that for \emph{fixed} $n$, weak dependence can change the
distribution of an estimator even if the marginal distributions of the
observed data are the same. In this respect, the intension of this
article differs from the objective of the existing literature on
qualitative robustness for dependent observations \cite
{Boenteatal1987,Bustos1981,Cox1981,Hampel1971,PapantoniKazakosGray1979}
where the estimators are demanded to be ``stable'' not only for large
but also for small samples (and are allowed to be more general than
plug-in estimators). The latter notion of robustness requires a more
sophisticated definition compared to Definition~\ref{def quali rob}
below. See, for instance, \cite{Boenteatal1987} for an informative
discussion on a proper choice for the definition of qualitative
robustness in this context.

%%%%%%%%%%%%%%%%%%%%%%%%%%%%%%%%%%%%%%%%%%%%%%%%%%%%%%%%%%%%%%%%

%s2 #&#
\section{Qualitative robustness and a Hampel theorem}\label{section
def m robustness}

Let $(E,\mathcal{E})$ be a measurable space, $\Omega:=E^{\N}$,
$\mathcal{F}:=\mathcal{E}^{{\N}}$, $X_i$ be the $i$th coordinate
projection on $\Omega$, and $\pr_i:=\pr\circ X_i^{-1}$ be the $i$th
marginal distribution of a probability measure $\pr$ on $(\Omega
,\mathcal{F})$. Let $\mathcal{P}$ be a class of probability measures
on $(\Omega,\mathcal{F})$ such that $\pr_1=\pr_2=\cdots$ for every
$\pr\in\mathcal{P}$. Let $\mathcal{P}_1:=\{\pr_1\dvtx \pr\in\mathcal
{P}\}$ be the corresponding class of all marginal distributions, and
$\mathcal{M}$ be any subset of the class $\mathcal{M}_1(E)$ of all
probability measures on $(E,\mathcal{E})$ such that $\mathcal
{P}_1\subset\mathcal{M}$. Let $(\T,\mathcal{T})$ be a measurable
space, and $T\dvtx \mathcal{M}\to\T$ be a mapping (statistical
functional). For every $n\in\N$, we assume that the mapping
%
%e2 #&#
\begin{equation}
\label{def estimator in our setting} \widehat T_n(x) = \widehat T_n
\bigl(x^{(n)}\bigr) := T(\widehat m_{x^{(n)}}),\qquad
x=(x_1,x_2,\ldots)\in\Omega
\end{equation}
is $(\mathcal{E}^{\N},\mathcal{T})$-measurable, where $\widehat
m_{x^{(n)}}:=\frac{1}{n}\sum_{i=1}^n\delta_{x_i}$ denotes the
empirical probability measures associated with $x^{(n)}:=(x_1,\ldots
,x_n)$. For (\ref{def estimator in our setting}) to be well defined,
we assume that the set of all such empirical probability measures is
contained in $\mathcal{M}$. Notice that $\widehat T_n$ provides an
estimator for $T(\mu)$. We let $d'$ be some metric on the set
$\mathcal{M}_1(\T)$ of all probability measures on $(\T,\mathcal
{T})$, and $d$ be some metric on $\mathcal{P}_1$.

%de2.1 #&#
\begin{definition}[(Qualitative robustness)]\label{def quali rob}
Let\vspace*{2pt} us take the notation from above, and let $\pr\in\mathcal{P}$.
Then the sequence $(\widehat T_n)$ of estimators is said to be \emph
{qualitatively $\mathcal{P}$-marginally robust} at $\pr$ w.r.t.
$(d,d')$ if for every $\varepsilon>0$ there are some $\delta>0$ and
$n_0\in\N$ such that
\[
\prq\in\mathcal{P},\qquad d(\pr_1,\prq_1)\le\delta\quad
\Longrightarrow\quad d'\bigl(\pr\circ\widehat T_n^{-1},
\prq\circ\widehat T_n^{-1}\bigr)\le \varepsilon\qquad
\forall n\ge n_0.
\]
\end{definition}

Sometimes also the functional $T$ itself will be called qualitatively
$\mathcal{P}$-marginally robust at $\pr$ if the corresponding
sequence of plug-in estimators $(\widehat T_n)$ is.

%re2.2 #&#
\begin{remarknorm}\label{unabhaeniger fall}
If every $\pr\in\mathcal{P}$ is an infinite product measure, that
is, $\pr=\pr_1^{\N}$, then Definition~\ref{def quali rob} coincides
with the classical definition of qualitative robustness for independent
observations; cf. \cite
{Huber1981,HuberRonchetti2009,Kraetschmeretal2012a}.
%Hampel1971
%{ $\Diamond$}
\end{remarknorm}

In applications, the validity of qualitative robustness in the sense of
Definition~\ref{def quali rob} is typically hard to check
``directly''. So it is natural to ask for transparent sufficient
conditions. In the framework of Remark~\ref{unabhaeniger fall}, the
celebrated Hampel theorem provides a sufficient condition for
qualitative robustness when $\mathcal{M}=\mathcal{P}_1=\mathcal
{M}_1(E)$, $E$ is Polish, and $d$ and $d'$ are the Prohorov metrics;
cf. \cite
{Cuevas1988,Hampel1971,Huber1981,HuberRonchetti2009,Mizera2010}. In
\cite{Kraetschmeretal2012a}, this result was extended to more general
metric spaces $(\mathcal{P}_1,d)$ but still in the framework of Remark~\ref{unabhaeniger fall}. In Theorem~\ref{hampel-huber generalized}
below, we will formulate a version of Hampel's criterion which can also
be applied to our general setting. As usual, we choose $d'$ as the
Prohorov metric. To this end, we assume that $\T$ is equipped with a
complete and separable metric $d_\T$ and that $\mathcal{T}$ is the
corresponding Borel $\sigma$-field. Recall that the Prohorov metric is
given by
\[
d_{\mathrm{{Proh}}}'(\mu,\nu) := \inf\bigl\{\varepsilon>0 \dvtx
\mu[A]\le \nu\bigl[A^\varepsilon\bigr]+\varepsilon\mbox{ for all }A\in
\mathcal{T}\bigr\},
\]
where $A^\varepsilon:=\{t\in T\dvtx  \inf_{a\in A}d_\T(t,a)\le
\varepsilon\}$ is the $\varepsilon$-hull of $A$. Moreover, we set
$\widehat m_n(x):=\widehat m_{x^{(n)}}$ for all $x\in\Omega$, and
assume that $x\mapsto d(\widehat m_n(x),\pr_1)$ is $(\mathcal
{F},\mathcal{B}(\R_+))$-measurable. In the following definition,
which is a generalization of Definition~2.3 in \cite
{Kraetschmeretal2012a}, the acronym UGC stands for ``uniform (weak)
Glivenko--Cantelli''.

%de2.3 #&#
\begin{definition}[(UGC property)]\label{def UGC property}
We say that $\mathcal{P}$ \emph{admits the UGC property w.r.t. $d$}
if for every $\delta>0$
%
%e3 #&#
\begin{equation}
\label{hampel-huber generalized - proof - 2} \lim_{n\to\infty} \sup_{\pr\in\mathcal{P}} \pr
\bigl[ d(\widehat m_n,\pr_1)\ge\delta \bigr] = 0.
\end{equation}
\end{definition}

%th2.4 #&#
\begin{theorem}[(Hampel-type theorem)]\label{hampel-huber generalized}
Assume that $\mathcal{P}$ admits the UGC property w.r.t. $d$, and let
$\pr\in\mathcal{P}$. Then, if the mapping $T$ is continuous at $\pr
_1$ w.r.t. $(d,d_\T)$, the sequence $(\widehat T_n)$ is qualitatively
$\mathcal{P}$-marginally robust at $\pr$ w.r.t. $(d,d_{\mathrm{{Proh}}}')$.
\end{theorem}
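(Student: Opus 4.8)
The plan is to anchor the laws of both estimators to the single Dirac measure $\delta_{T(\pr_1)}$ and then to conclude via the triangle inequality for the Prohorov metric. Since the only continuity hypothesis available is continuity of $T$ at the single point $\pr_1$, every estimate must be routed through the reference point $\pr_1$ rather than through $\prq_1$. Fix $\varepsilon>0$. First I would invoke the continuity of $T$ at $\pr_1$ w.r.t.\ $(d,d_\T)$ to produce $\gamma>0$ such that $d(m,\pr_1)\le\gamma$ implies $d_\T(T(m),T(\pr_1))\le\varepsilon/2$ for every admissible $m$. I then set $\delta:=\gamma/2$ and use the UGC property to choose $n_0$ so large that $\sup_{\prq\in{\cal P}}\prq[d(\widehat m_n,\prq_1)\ge\delta]\le\varepsilon/2$ for all $n\ge n_0$. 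The factor $1/2$ in $\delta=\gamma/2$ is exactly what lets me absorb both the statistical fluctuation $d(\widehat m_n,\prq_1)$ and the contamination distance $d(\prq_1,\pr_1)$.

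The core step is a concentration estimate applied to $\prq$, not to $\pr$. For $\prq\in{\cal P}$ with $d(\pr_1,\prq_1)\le\delta$ and $n\ge n_0$, on the event $\{d(\widehat m_n,\prq_1)<\delta\}$ the triangle inequality for $d$ gives $d(\widehat m_n,\pr_1)\le d(\widehat m_n,\prq_1)+d(\prq_1,\pr_1)<2\delta=\gamma$, whence $d_\T(\widehat T_n,T(\pr_1))\le\varepsilon/2$ by the choice of $\gamma$. Since this event has $\prq$-probability at least $1-\varepsilon/2$, the pushforward law satisfies $(\prq\circ\widehat T_n^{-1})[\overline B(T(\pr_1),\varepsilon/2)]\ge 1-\varepsilon/2$, where $\overline B(t_0,r):=\{t\in\T:d_\T(t,t_0)\le r\}$.

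The last ingredient is the elementary observation that any probability measure $Q$ on $\T$ with $Q[\overline B(t_0,r)]\ge 1-r$ satisfies $d_{\mbox{\scriptsize{\rm Proh}}}'(Q,\delta_{t_0})\le r$; this is verified directly from the definition of the Prohorov metric by distinguishing whether or not $t_0$ lies in the sets $A$ and $A^\varepsilon$ that appear there. Applying this with $Q=\prq\circ\widehat T_n^{-1}$, and then once more with $Q=\pr\circ\widehat T_n^{-1}$ (legitimate since $d(\pr_1,\pr_1)=0\le\delta$), yields both $d_{\mbox{\scriptsize{\rm Proh}}}'(\prq\circ\widehat T_n^{-1},\delta_{T(\pr_1)})\le\varepsilon/2$ and $d_{\mbox{\scriptsize{\rm Proh}}}'(\pr\circ\widehat T_n^{-1},\delta_{T(\pr_1)})\le\varepsilon/2$ for all $n\ge n_0$. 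The triangle inequality for $d_{\mbox{\scriptsize{\rm Proh}}}'$ then gives $d_{\mbox{\scriptsize{\rm Proh}}}'(\pr\circ\widehat T_n^{-1},\prq\circ\widehat T_n^{-1})\le\varepsilon$, which is precisely the implication required in Definition \ref{def quali rob}.

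I anticipate the main obstacle to be conceptual rather than computational: because continuity is assumed only at $\pr_1$, one cannot argue that $T(\widehat m_n)$ is close to $T(\prq_1)$, and the correct move is to apply the UGC bound to $\prq$ while transferring proximity back to $\pr_1$ through the metric triangle inequality. The measurability of $x\mapsto d(\widehat m_n(x),\pr_1)$ and of $\widehat T_n$, assumed before the theorem, is what makes the events and pushforward laws above well defined.
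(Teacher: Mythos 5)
Your proof is correct, and its skeleton is the same as the paper's: both arguments anchor the two estimator laws at the Dirac measure $\delta_{T(\pr_1)}$, conclude by the triangle inequality for $d_{\mbox{\scriptsize{\rm Proh}}}'$, and obtain the required concentration estimate by applying the UGC property under $\prq$ (uniformly in $\prq\in{\cal P}$, which is what makes $n_0$ independent of $\prq$), transferring closeness back to $\pr_1$ via the triangle inequality $d(\widehat m_n,\pr_1)\le d(\widehat m_n,\prq_1)+d(\prq_1,\pr_1)\le 2\delta$, and then invoking continuity of $T$ at $\pr_1$ only. The one genuine difference lies in how the concentration estimate $(\prq\circ\widehat T_n^{-1})[\overline B(T(\pr_1),\varepsilon/2)]\ge 1-\varepsilon/2$ is converted into the Prohorov bound $d_{\mbox{\scriptsize{\rm Proh}}}'(\prq\circ\widehat T_n^{-1},\delta_{T(\pr_1)})\le\varepsilon/2$: the paper does this by feeding the product coupling $\delta_{T(\pr_1)}\times(\prq\circ\widehat T_n^{-1})$ into the implication (ii)$\Rightarrow$(i) of Strassen's theorem, whereas you verify the bound directly from the definition of $d_{\mbox{\scriptsize{\rm Proh}}}'$ by the case distinction on whether $T(\pr_1)$ lies in $A$ (respectively $A^{\varepsilon/2}$). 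Your lemma is correct in both orderings of the arguments, so the triangle inequality applies; since the paper only ever uses the easy direction of Strassen's theorem, your route loses nothing and is more self-contained, while the paper's route makes explicit that the estimate is really a (trivial) coupling argument and keeps Strassen's theorem on record, whose hard direction is what underlies such Prohorov-distance characterizations in general.
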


The proof of Theorem~\ref{hampel-huber generalized} can be found in
Section~\ref{proof of hampel-huber generalized}.

%re2.5 #&#
\begin{remarknorm}
Let $\mathcal{M}_{1,\mathrm{emp}}$ denote the space of all empirical
probability measures $\widehat m_{x^{(n)}}$ with $x\in E^{\N}$ and $n\in
\N$, and recall that we assumed $\mathcal{M}_{1,\mathrm{emp}}\subset
\mathcal{M}$. It is clear from the proof in Section~\ref{proof of
hampel-huber generalized} that in Theorem~\ref{hampel-huber
generalized} it suffices to require that $T$ is $\mathcal
{M}_{1,\mathrm{emp}}$-continuous at $\pr_1$ w.r.t. $(d,d_\T)$,
meaning that for every $\varepsilon>0$ there is some $\delta>0$ such
that for all $\nu\in\mathcal{M}_{1,\mathrm{emp}}$ with $d(\pr
_1,\nu)\le\delta$ we have that $d_\T(T(\pr_1),T(\nu))\le
\varepsilon$.
%{ $\Diamond$}
\end{remarknorm}

%re2.6 #&#
\begin{remarknorm}\label{hampel-huber conversed}
Adapting the proof of Theorem~2.6 in \cite{Kraetschmeretal2012a}, one
also obtains a sort of converse of Hampel's theorem. More precisely,
fix $\pr\in\mathcal{P}$ and assume that $(\widehat T_n)$ is weakly
consistent w.r.t. $d_\T$ (i.e., $\widehat T_n$ converges in $\Q
$-probability to $T(\Q_1)$ w.r.t. $d_\T$) at every $\Q\in\mathcal
{P}$ for which $\Q_1$ lies in some neighborhood of $\pr_1$ w.r.t.
$d$. Then qualitative $\mathcal{P}$-marginal robustness of the
sequence $(\widehat T_n)$ at $\pr$ w.r.t. $(d,d_{\mathrm{{Proh}}}')$
implies that the mapping $T$ is $\mathcal{P}_1$-continuous at $\pr_1$
w.r.t. $(d,d_\T)$. The latter means that for every $\varepsilon>0$
there is some $\delta>0$ such that $d_\T(T(\pr_1),T(\nu))\le
\varepsilon$ for every $\nu\in\mathcal{P}_1$ with $d(\pr_1,\nu
)\le\delta$.
%{ $\Diamond$}
\end{remarknorm}

In Section~\ref{section examples}, we will give two examples for
classes of probability measures on $(\Omega,\mathcal{F})$ admitting
the UGC property. To motivate the Hampel-type Theorem~\ref
{hampel-huber generalized},\vadjust{\goodbreak} we will also discuss continuity of
particular statistical functionals w.r.t. the involved metrics.

The classical choice for $d$ in the framework of Definition~\ref{def
quali rob} is any metric generating the weak topology. The most
prominent examples are the Prohorov metric and the L\'{e}vy metric used
in \cite
{Cuevas1988,Hampel1971,Huber1981,HuberRonchetti2009,Mizera2010} and
many further references. Another example is the bounded Lipschitz
metric used, for instance, in \cite
{Christmannetal2011,CuevasRomo1993,Huber1981,HuberRonchetti2009}.
However, for some purposes it is somewhat restrictive to use
exclusively a metric generating the weak topology. The use of such a
metric creates a sharp division of the class of statistical functionals
$T$ into those for which $(\widehat T_n)$ is ``robust'' and those for
which $(\widehat T_n)$ is ``not robust''. Indeed, Hampel's theorem says
that $(\widehat T_n)$ is ``robust'' if and only if $T$ is continuous
w.r.t. the weak topology. But the distributions of the plug-in
estimators of two statistical functionals being not continuous w.r.t.
the weak topology may react quite different to changes in the
underlying (marginal) distribution, just as these plug-in estimators
may have quite different influence functions. For this reason, it was
proposed in \cite{Kraetschmeretal2012a} to investigate $(\widehat
T_n)$ for qualitative robustness w.r.t. more general metrics, where it
is clear that qualitative robustness w.r.t. a metric $d_1$ is a
stronger condition than qualitative robustness w.r.t. a metric $d_2\le
d_1$. In particular, a statistical functional $T_1$ can be considered
to have a higher ``degree of robustness'' than another statistical
functional $T_2$ when $T_1$ is qualitatively robust for any choice of
$d$ for which $T_2$ is qualitatively robust. In this way, it gets
possible to differentiate plug-in estimators w.r.t. qualitative
robustness within the class of statistical functionals that are not
weakly continuous. Sensible classes of metrics that can be studied in
this context are, for instance, $\{d_{(\phi)}\}_\phi$ and $\{d_{\psi
,\mathrm{{vag}}}\}_\psi$ to be introduced in (\ref{dphi Definition
Eq}) and (\ref{def psi weak metric}), respectively, where $\phi$ and
$\psi$ (or rather their increases) can be seen as gauges for the
strictness of the metric and thus for the ``degree of robustness''. For
details see \cite{Kraetschmeretal2012a,Kraetschmeretal2012b}. The
latter reference provides in particular a rigorous quantification of
the ``degree of robustness'' for convex risk functionals.

The preceding discussion counters somewhat the conventional point of
view that the sequence $(\widehat T_n)$ of plug-in estimators can be
considered to be qualitatively robust exclusively when $T$ is
continuous w.r.t. the weak topology. But even if one insists on the use
of the weak topology, the considerations below provide new results. For
instance, Corollary~\ref{glivenko cantelli - corollary} and Theorem~\ref{hampel-huber generalized} together yield a nontrivial
generalization of the classical Hampel theorem in the form of \cite
{Huber1981}, Theorem~2.21. In this theorem the underlying metric is the
L\'{e}vy metric which generates the weak topology.

In the sequel we will repeatedly work with left- and right-continuous
inverses. Recall that the left-continuous inverse of any nondecreasing
function $H\dvtx \R\to\R$ is defined by $H^\leftarrow(t):=\inf\{y\in\R
\dvtx H(y)\ge t\}$ with the convention $\inf\emptyset=\infty$. The
right-continuous inverse $H^\rightarrow$ of any nonincreasing function
$H\dvtx \R_+\to\R_+$ is defined by $H^\rightarrow(t):=\sup\{y\in\R
_+\dvtx H(y)>t\}$ with the convention $\sup\emptyset:=0$. We will also
repeatedly use the notion of strong mixing which is recalled in the
Appendix~\ref{appendix rio}. The $n$th strong mixing coefficient of
the coordinate process $(X_i)$ under the law $\pr$ will be denoted by
$\alpha_\pr(n)$.

%%%%%%%%%%%%%%%%%%%%%%%%%%%%%%%%%%%%%%%%%%%%%%%%%%%%%%%%%%%%%%%%

%s3 #&#
\section{Examples}\label{section examples}

%%%%%%%%%%%%%%%%%%%%%%%%%%%%%%%%%%%%%%%%%%%%%%%%%%%%%%%%%%%%%%%%

%s3.1 #&#
\subsection{Strong mixing, and Kolmogorov \texorpdfstring{$\phi$}{phi}-metric or L\'{e}vy
metric}\label{UGC weighted kolmogorov}

In this section, we will see that in the case $E=\R$ a large class of
probability measures on $(\Omega,\mathcal{F})$ admits the UGC
property w.r.t. (a weighted version of) the Kolmogorov metric. As a
corollary we will also obtain the UGC property w.r.t. the L\'{e}vy
metric. Let $\phi$ be a \emph{u-shaped function}, that is, a
continuous function $\phi\dvtx \R\to[1,\infty)$ that is nonincreasing on
$(-\infty,0)$ and nondecreasing on $(0,\infty)$. Then
%
%e4 #&#
\begin{equation}
\label{dphi Definition Eq} d_{(\phi)}(\mu,\nu) := \sup_{y\in\R} \bigl
\llvert F_{\mu}(y)-F_\nu(y)\bigr\rrvert \phi(y)
\end{equation}
defines a metric on the set $\mathcal{M}_1^{(\phi)}(\R)$ of all
probability measures $\mu$ on $(\R,\mathcal{B}(\R))$ for which
$d_{(\phi)}(\mu,\delta_0)<\infty$. We will refer to $d_{(\phi)}$
as \emph{Kolmogorov $\phi$-metric}. Notice that $\mu\in\mathcal
{M}_1^{(\phi)}(\R)$ if $\int\phi \,\mathrm{d}\mu<\infty$, and that $d_{(\phi
)}$ is just the classical Kolmogorov metric for $\phi:=\eins$.

In \cite{BeutnerZaehle2013,Zaehle2011} it is demonstrated that many L-
and V-functionals $T$ as well as many coherent risk functionals $T$ are
continuous w.r.t. $d_{(\phi)}$ with $\phi$ depending on the
particular functional $T$; see also Example~\ref{remark on cond b -
role of phi} for some simple examples. So, in view of the Hampel-type
Theorem~\ref{hampel-huber generalized}, for a given class $\mathcal
{P}$ of probability measures on $(\Omega,\mathcal{F})$, the
corresponding sequence $(\widehat T_n)$ of plug-in estimators is
qualitatively $\mathcal{P}$-marginally robust w.r.t. $(d_{(\phi
)},d_{\mathrm{{Proh}}}')$ at any $\pr\in\mathcal{P}$ if $\mathcal
{P}$ admits the UGC property w.r.t. $d_{(\phi)}$ in the sense of
Definition~\ref{def UGC property}. The following Theorem~\ref
{glivenko cantelli} shows that the latter is true if under every $\pr
\in\mathcal{P}$ the coordinates of the coordinate process $(X_i)$ are
identically distributed and strongly mixing with uniformly (in $\pr\in
\mathcal{P}$) decaying mixing coefficients $(\alpha_\pr(n))$ and if
the class of marginal distributions $\mathcal{P}_1$ is uniformly $\phi
$-integrating. It is remarkable that the common rate of decay of the
mixing coefficients may be arbitrarily slow.

%th3.1 #&#
\begin{theorem}[(UGC property w.r.t. $\boldsymbol{d}_{\boldsymbol{(\phi)}}$)]\label
{glivenko cantelli}
Let $\phi$ be a u-shaped function, and $\mathcal{P}$ be a class of
probability measures on $(\Omega,\mathcal{F})$ such that
\begin{enumerate}[(b)]
\item[(a)] $\pr_1=\pr_2=\cdots$ for all $\pr\in\mathcal{P}$,
\item[(b)] $\lim_{K\to\infty}\sup_{\pr\in\mathcal{P}}\int\phi
(y)\eins_{\phi(y)\ge K} \pr_1(\mathrm{d}y)=0$,
%$\sup_{\pr\in\mathcal{P}}\int
%\phi^2 d\pr_1<\infty$,
%
\item[(c)] $\lim_{n\to\infty}\sup_{\pr\in\mathcal{P}}\alpha
_\pr(n)=0$.
\end{enumerate}
Then, for every $\delta>0$,
%
%e5 #&#
\begin{equation}
\label{glivenko cantelli - eq} \lim_{n\to\infty} \sup_{\pr\in\mathcal{P}} \pr
\bigl[ d_{(\phi
)}(\widehat m_{n},\pr_1)\ge\delta
\bigr] = 0.
\end{equation}
\end{theorem}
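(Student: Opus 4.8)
The plan is to split the weighted Kolmogorov distance into a contribution from the tails $\{|y|>M\}$ and a contribution from a central window $\{|y|\le M\}$, to control the tails by a truncation/moment argument powered by (b), and to control the central window by a uniform (unweighted) Glivenko--Cantelli estimate that rests on the uniform weak law of large numbers in Appendix~\ref{appendix rio}. Fix $\delta>0$ and an auxiliary $\varepsilon>0$; I would aim to show $\limsup_{n\to\infty}\sup_{\pr\in{\cal P}}\pr[d_{(\phi)}(\widehat m_n,\pr_1)\ge\delta]\le\varepsilon$ and then let $\varepsilon\downarrow0$. Writing $d_{(\phi)}(\widehat m_n,\pr_1)=\sup_{y\in\R}|F_{\widehat m_n}(y)-F_{\pr_1}(y)|\,\phi(y)$ and using a union bound over the three regions $\{y>M\}$, $\{y<-M\}$, $\{|y|\le M\}$, it suffices to estimate each region separately for a threshold $M>0$ still to be chosen.

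For the right tail I would bound $|F_{\widehat m_n}(y)-F_{\pr_1}(y)|\,\phi(y)\le\phi(y)(1-F_{\pr_1}(y))+\phi(y)(1-F_{\widehat m_n}(y))$. Since $\phi$ is nondecreasing on $(0,\infty)$, for $y>M>0$ one has $\phi(y)(1-F_{\pr_1}(y))=\phi(y)\,\pr_1((y,\infty))\le\int_{(M,\infty)}\phi(z)\,\pr_1(dz)\le\int\phi(z)\eins_{\phi(z)\ge\phi(M)}\,\pr_1(dz)$, a deterministic quantity made uniformly small by (b). For the empirical part the same monotonicity gives $\sup_{y>M}\phi(y)(1-F_{\widehat m_n}(y))\le\frac1n\sum_{i=1}^n\phi(X_i)\eins_{X_i>M}\le\frac1n\sum_{i=1}^n\phi(X_i)\eins_{\phi(X_i)\ge\phi(M)}$, whose $\pr$-expectation equals $\int\phi(z)\eins_{\phi(z)\ge\phi(M)}\,\pr_1(dz)$ by (a); Markov's inequality then bounds the right-tail probability uniformly in $n$ and in $\pr$ by a multiple of $\sup_{\pr\in{\cal P}}\int\phi(z)\eins_{\phi(z)\ge\phi(M)}\,\pr_1(dz)$, which tends to $0$ as $M\to\infty$ by (b). The left tail is symmetric. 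I would fix $M$ so large that both tail probabilities are at most $\varepsilon/4$ for every $n$ and every $\pr$.

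With $M$ fixed, $\phi$ is bounded on $[-M,M]$ by $\phi_M:=\max_{|y|\le M}\phi(y)<\infty$, so the central region reduces to showing $\sup_{\pr\in{\cal P}}\pr[\sup_{|y|\le M}|F_{\widehat m_n}(y)-F_{\pr_1}(y)|\ge\eta]\to0$ with $\eta:=\delta/\phi_M$. Here I would invoke the classical monotonicity reduction: choose grid points $-M=y_0<\cdots<y_N=M$ with $F_{\pr_1}(y_j^-)-F_{\pr_1}(y_{j-1})\le1/m$ for all $j$ (for instance built from the leftcontinuous inverse $F_{\pr_1}^{\leftarrow}$, so that $N\le2m$ independently of $\pr$), and $m$ with $1/m\le\eta/2$; monotonicity of distribution functions then gives $\sup_{|y|\le M}|F_{\widehat m_n}-F_{\pr_1}|\le\max_j\{|F_{\widehat m_n}(y_j)-F_{\pr_1}(y_j)|,|F_{\widehat m_n}(y_j^-)-F_{\pr_1}(y_j^-)|\}+1/m$. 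At each grid point $F_{\widehat m_n}(y_j)=\frac1n\sum_i\eins_{X_i\le y_j}$ is a sample mean of the bounded ($\le1$) random variables $\eins_{X_i\le y_j}$, which are measurable functions of the $X_i$ and hence strongly mixing with coefficients at most $\alpha_\pr(n)$; the uniform weak LLN of Appendix~\ref{appendix rio} (via Rio's maximal inequality together with (c) and the Ces\`aro convergence $\frac1n\sum_{i=0}^n\sup_{\pr\in{\cal P}}\alpha_\pr(i)\to0$) yields $\sup_{\pr\in{\cal P}}\pr[|F_{\widehat m_n}(y_j)-F_{\pr_1}(y_j)|\ge\eta/2]\to0$. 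A union bound over the at most $4m$ relevant indicators completes the central estimate, and combining it with the tail bound gives $\limsup_n\sup_{\pr\in{\cal P}}\pr[d_{(\phi)}(\widehat m_n,\pr_1)\ge\delta]\le\varepsilon/2$; letting $\varepsilon\downarrow0$ proves the claim.

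The step I expect to be the crux is the uniformity in $\pr$ of the central estimate. The oscillation bound $F_{\pr_1}(y_j^-)-F_{\pr_1}(y_{j-1})\le1/m$ forces the grid points $y_j$ to depend on $\pr_1$ (they are essentially quantiles), so the finitely many indicators to which the weak LLN is applied vary with $\pr$. What saves the argument is that the bound coming from Rio's inequality depends on the summands only through their sup-norm (here $\le1$) and through the mixing coefficients, not through the particular threshold; hence the convergence is uniform over all admissible grid points and therefore over $\pr\in{\cal P}$. A secondary point to check carefully is the passage from the mixing coefficients of $(X_i)$ to those of the indicator processes, together with the fact that only the Ces\`aro mean of $\sup_{\pr\in{\cal P}}\alpha_\pr(\cdot)$ needs to vanish --- which is precisely why (c) alone, with arbitrarily slow common decay, already suffices.
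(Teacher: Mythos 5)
Your overall architecture (tails by truncation via (b), central window by a quantile grid plus the uniform weak LLN of Corollary \ref{probability bound}) is a genuinely different route from the paper's, and the central-window half is sound: the grid has $\pr$-independent cardinality, the indicators $\eins_{X_i\le y_j(\pr)}$ are bounded by $1$ and inherit the mixing coefficients of $(X_i)$, so Corollary \ref{probability bound} applies uniformly over ${\cal P}$ exactly as you argue. The genuine gap is in the tail step. Your right-tail bound ends with $\int\phi(z)\eins_{\phi(z)\ge\phi(M)}\,\pr_1(dz)$, which you claim is ``made uniformly small by (b)''. Condition (b) concerns the limit $K\to\infty$, so this works only if $\phi(M)\to\infty$ as $M\to\infty$, i.e.\ only if $\phi$ is unbounded on the right half-line. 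A u-shaped function need not be unbounded: for the constant $\phi=\eins$ --- the classical Kolmogorov metric, which the theorem explicitly covers and which is precisely the case used to deduce Corollary \ref{glivenko cantelli - corollary} and Example \ref{linear process} --- condition (b) is vacuous (Remark \ref{remark on cond b}\,(i)) while $\int\phi\,\eins_{\phi\ge\phi(M)}\,d\pr_1=\int\phi\,d\pr_1\ge1$ for every $M$. And the failure is not merely one of your bound: since no uniform tightness of $\{\pr_1:\pr\in{\cal P}\}$ is assumed, ${\cal P}$ may contain laws whose marginals put all mass to the right of any fixed $M$ (say, uniform on $[a,a+1]$ with $a$ arbitrary), and then your deterministic tail term $\sup_{y>M}\phi(y)(1-F_{\pr_1}(y))$ equals $1$, while for $n=1$ the tail supremum $\sup_{y>M}|F_{\widehat m_n}(y)-F_{\pr_1}(y)|$ is at least $1/2$ almost surely. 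So no choice of $M$ can make the tail probabilities ``at most $\varepsilon/4$ for every $n$ and every $\pr$'', and the proof as written breaks down exactly in the flagship case $\phi=\eins$, and more generally whenever $\phi$ is bounded on at least one half-line.

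The repair is easy and keeps your architecture: truncation is needed, and works, only on a tail where $\phi(y)\to\infty$; on a tail where $\phi$ is bounded you must not truncate at all, but instead extend your quantile-grid/uniform-LLN argument to that entire half-line, which costs nothing because the grid is built from quantiles of $\pr_1$, the resulting oscillation bound is distribution-free, and the number of grid points stays independent of $\pr$. With that case distinction your argument goes through. For comparison, the paper avoids any case distinction by transporting everything to $[0,1]$ via the quantile transformation and running an $L^1(d\I)$-bracketing argument (in the spirit of van der Vaart--Wellner, Theorem 2.4.1) on the weight functions $w_\pr(t)=\phi(F_\pr^\leftarrow(t))\eins_{[0,F_\pr(0)]}(t)$; condition (b) is used there to produce finitely many $\varepsilon$-brackets with $\pr$-independent count, and Corollary \ref{probability bound} is then applied to the bracket ends. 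Both proofs run on the same engine --- Rio's inequality through Corollary \ref{probability bound}, applied to finitely many $\pr$-dependent but uniformly bounded summands --- and differ only in how the supremum over $y$ is reduced to finitely many terms.
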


The proof of Theorem~\ref{glivenko cantelli} can be found in Section~\ref{proof of glivenko cantelli}. Notice that (c) implies in
particular that the coordinate process $(X_i)$ is strongly mixing under
every $\pr\in\mathcal{P}$.

%re3.2 #&#
\begin{remarknorm}\label{remark on cond b}
(i) Condition (b) is always fulfilled if $\phi$ is bounded, in
particular if $d_{(\phi)}$ is the classical Kolmogorov metric ($\phi
=\eins$).

(ii) Condition (b) is also fulfilled if one can find some function
$w\dvtx \R\to[0,\infty)$ such that $\lim_{|x|\to\infty}w(x)/\phi
(x)=\infty$ and $\sup_{\pr\in\mathcal{P}}\int w(y) \pr
_1(\mathrm{d}y)<\infty$; cf. \cite{Kraetschmeretal2012b}, Lemma~4.1.
%{ $\Diamond$}
\end{remarknorm}

%re3.3 #&#
\begin{remarknorm}\label{remark on cond b - two}
Since the L\'{e}vy metric $d_{\mbox{\scriptsize{{L\'{e}vy}}}}$ (defined in (\ref{def levy metric}) below) generates the weak topology (cf. \cite{Huber1981}, page
25) and the Kolmogorov metric $d_{(\eins)}$ dominates the
L\'{e}vy metric, that is, $d_{\mbox{\scriptsize{{L\'{e}vy}}}}\le d_{(\eins)}$
(cf. \cite{Huber1981}, page 34), we obtain that every functional $T$
which is weakly continuous at some $\mu\in\mathcal{M}_1(\R)$ is
also continuous w.r.t. $d_{(\eins)}$ at $\mu$. Further notice that
qualitative robustness w.r.t. $d_{\mbox{\scriptsize{{L\'{e}vy}}}}$ implies
qualitative robustness w.r.t. $d_{(\eins)}$. On the other hand, the
UGC property w.r.t. $d_{(\eins)}$ implies the UGC property w.r.t.
$d_{\mbox{\scriptsize{{L\'{e}vy}}}}$.
%{ $\Diamond$}
\end{remarknorm}

%ex3.4 #&#
\begin{examplenorm}\label{remark on cond b - role of phi}
(i) It is well known that for fixed $\alpha\in(0,1)$ the lower
$\alpha$-quantile functional $T_\alpha(\mu):=F_\mu^\leftarrow
(\alpha)$ is continuous w.r.t. the weak topology at $\mu\in\mathcal
{M}_1(\R)$ when $F^{\leftarrow}_{\mu}$ is continuous at $\alpha$;
see, for instance, \cite{vanderVaart1998}, Lemma~21.2. According to
the first part of Remark~\ref{remark on cond b - two}, $T_\alpha$ is
in particular continuous w.r.t. the Kolmogorov metric $d_{(\eins)}$ at
this $\mu$.

(ii) The mean functional $T^{(1)}(\mu):=\int y \mu(\mathrm{d}y)$ is continuous
on $\mathcal{M}^{(\phi)}$ w.r.t. $d_{(\phi)}$ for any $\phi$
satisfying $\int1/\phi(y) \,\mathrm{d}y<\infty$. This follows from the
inequality $|T^{(1)}(\mu)-T^{(1)}(\nu)|\le\int|F_\mu(y)-F_\nu(y)|
\,\mathrm{d}y$ which holds for all $\mu,\nu\in\mathcal{M}^{(\phi)}$. For
instance, we may choose $\phi(y)=(1+|y|)^{1+\varepsilon}$ for
arbitrarily small $\varepsilon>0$. In this case, condition (b) in
Theorem~\ref{glivenko cantelli} holds when $\sup_{\pr\in\mathcal
{P}}\int|y|^{1+\varepsilon'} \pr_1(\mathrm{d}y)<\infty$ for some
$\varepsilon'>\varepsilon$; cf. Remark~\ref{remark on cond b}(ii).

(iii) The second moment functional $T^{(2)}(\mu):=\int y^2 \mu(\mathrm{d}y)$,
and thus the variance functional, is continuous on $\mathcal{M}^{(\phi
)}$ w.r.t. $d_{(\phi)}$ for any $\phi$ satisfying $\int|y|/\phi(y)
\,\mathrm{d}y<\infty$. This follows from the inequality $|T^{(2)}(\mu
)-T^{(2)}(\nu)| \le2\int|F_\mu(y)-F_\nu(y)| |y| \,\mathrm{d}y$ which holds
for all $\mu,\nu\in\mathcal{M}^{(\phi)}$. For instance, we may
choose $\phi(y)=(1+|y|)^{2+\varepsilon}$ for arbitrarily small
$\varepsilon>0$. In this case, condition (b) in Theorem~\ref{glivenko
cantelli} holds when $\sup_{\pr\in\mathcal{P}}\int
|y|^{2+\varepsilon'} \pr_1(\mathrm{d}y)<\infty$ for some $\varepsilon
'>\varepsilon$; cf. Remark~\ref{remark on cond b}(ii).
%{ $\Diamond$}
\end{examplenorm}

%ex3.5 #&#
\begin{examplenorm}[(Linear processes)]\label{linear process}
Let $(Z_s)_{s\in\Z}$ be a sequence of i.i.d. random variables on any
probability space, assume that $|Z_1|$ has a finite expectation denoted
by $L$, and assume that the distribution of $Z_1$ admits a Lebesgue
density $f$ for which $\int|f(y+h)-f(y)| \,\mathrm{d}y<M|h|$ for all $h\in\R$
and some constant $M>0$. Define the linear process $X_t^{a} := \sum_{s=0}^\infty a_{s}Z_{t-s}$, $t\in\N$, for any real sequence
$a=(a_s)_{s\in\N_0}$, and let $A$ be the class of all real sequences
$a$ for which $a_0=1$ and $\sum_{s=0}^\infty a_{s}Z_{t-s}$ is almost
surely absolutely convergent for every $t\in\N$. Results in \cite
{PhamTran1985} imply that, if $a\in A$ satisfies $\sum_{s=0}^\infty
a_s z^s\neq0$ for all $z$ with $|z|\le1$, and $\sum_{u=1}^\infty
\sum_{s=u}^\infty|a_s|<\infty$, then $(X_t^a)$ is strongly mixing
with mixing coefficients $(\alpha_a(n))$ satisfying $\alpha_a(n)\le
(2ML\sum_{s=0}^\infty|b_s(a)|)\sum_{u=n}^\infty\sum_{s=u}^\infty
|a_s|$, where $b_s(a)$ is the coefficient of $z^s$ in the power series
expansion of $z\mapsto1/\sum_{s=0}^\infty a_sz^s$, and $\sum_{s=0}^\infty|b_s(a)|<\infty$; cf. the Appendix~\ref{Strong mixing
of linear processes}. If we denote by $\pr^{a}$ the law of $(X_t^{a})$
on $\R^{\N}$, then, of course, the coordinate process $(X_t)$ on $\R
^{\N}$ is also strongly mixing under $\pr^a$ with mixing coefficients
$(\alpha_{\pr^a}(n))$ satisfying $\alpha_{\pr^a}(n)\le(2ML\sum_{s=0}^\infty|b_s(a)|)\sum_{u=n}^\infty\sum_{s=u}^\infty|a_s|$.

Now, let $A'$ be any subset of $A$ such that
\begin{enumerate}[(iii)]
\item[(i)] $\sum_{s=0}^\infty a_s z^s\neq0$ for all $z$ with
$|z|\le1$, for every $a\in A'$,
\item[(ii)] $\lim_{n\to\infty}\sup_{a\in A'}\sum_{u=n}^\infty
\sum_{s=u}^\infty|a_s|=0$,
\item[(iii)] $\sup_{a\in A'}\sum_{s=0}^\infty|b_s(a)|<\infty$.
\end{enumerate}
Then we obtain from the statement above that the class $\mathcal
{P}=\mathcal{P}':=\{\pr^a\dvtx a\in A'\}$ satisfies condition (c) in
Theorem~\ref{glivenko cantelli}. Moreover, condition (a) in Theorem~\ref{glivenko cantelli} is fulfilled for $\mathcal{P}=\mathcal{P}'$
anyway, and condition (b) in Theorem~\ref{glivenko cantelli} is always
fulfilled when $\phi=\eins$. Therefore, Theorem~\ref{glivenko
cantelli} shows that $\mathcal{P}'$ admits the UGC property for the
Kolmogorov metric $d_{(\eins)}$. In particular, the Hampel-type
Theorem~\ref{hampel-huber generalized} implies that any $d_{(\eins
)}$-continuous functional $T$ is qualitatively $\mathcal
{P}'$-marginally robust at any $\pr^a\in\mathcal{P}'$ w.r.t.
$(d_{(\eins)},d_{\mathrm{{Proh}}}')$. That is, for every $\varepsilon
>0$ there are some $\delta>0$ and $n_0\in\N$ such that
\[
\pr^{a'}\in\mathcal{P}',\qquad d_{(\eins)}\bigl(
\pr_1^a,\pr_1^{a'}\bigr)\le \delta
\quad\Longrightarrow\quad d_{\mathrm{{Proh}}}'\bigl(\pr^a
\circ\widehat T_n^{-1}, \pr^{a'}\circ\widehat
T_n^{-1}\bigr)\le\varepsilon \qquad\forall n\ge
n_0.
\]

To get a feeling for the condition on the left, it is appealing to find
preferably sharp conditions on $a'=(a'_s)_{s\in\N_0}$ and the
distribution of $Z_0$ under which the distance $d_{(\eins)}(\pr
_1^a,\pr_1^{a'})$ does not exceed a given $\delta>0$. This is an
interesting problem on its own, and we will not enlarge upon this here.
We will only mention that, if $\sum_{s=0}^\infty d_{(\eins)}(\mu
_{a_s},\mu_{a'_s})<\infty$ with $\mu_{a_s}$ and $\mu_{a'_s}$ the
laws of $a_sZ_0$ and $a'_sZ_0$, respectively, then, using the
convolution formula, one easily obtains the estimate $d_{(\eins)}(\pr
_1^a,\pr_1^{a'})\le\sum_{s=0}^\infty d_{(\eins)}(\mu_{a_s},\mu
_{a'_s})$ from where on can derive some respective conditions. However,
there might be more sophisticated approaches.
%{ $\Diamond$}
\end{examplenorm}

%ex3.6 #&#
\begin{examplenorm}[($\operatorname{\mathbf{ARMA}}\boldsymbol{(1,1)}$ processes)]\label{ARMA process}
To illustrate conditions (i)--(iii) in Example~\ref{linear process},
let us consider for any real $\phi_1,\theta_1$ an $\operatorname{ARMA}(1,1)$ process
$X_t^{\phi_1,\theta_1}=\phi_1 X_{t-1}^{\phi_1,\theta_1}
+Z_{t}+\theta_1 Z_{t-1}$ based on a given sequence $(Z_s)_{s\in\Z}$
of square-integrable and centered i.i.d. random variables. Moreover,
let $0<c<1$ be arbitrary but fixed. It is discussed in detail in
Example~\ref{ARMA process - appendix} in the Appendix~\ref{Strong
mixing of linear processes} that if $|\phi_1|,|\theta_1|\le c$ and
$\phi_1\neq-\theta_1$, then the ARMA process $(X_t^{\phi_1,\theta
_1})$ can be represented as a linear process $\sum_{s=0}^\infty
a_s(\phi_1,\theta_1) Z_{t-s}$ with $a_0(\phi_1,\theta_1)=1$.
Letting $A'_{c}:=\{(a_s(\phi_1,\theta_1))_{s\in\N_0}\dvtx |\phi
_1|,|\theta_1|\le c\mbox{ and }\phi_1\neq-\theta_1\}$, Example~\ref{ARMA process - appendix} also shows that
\begin{enumerate}[(iii)]
\item[(i)] $\sum_{s=0}^\infty a_sz^s\neq0$ for all $z$ with $|z|\le
1$, for every $a\in A'_{c}$,
\item[(ii)] $\sup_{a\in A'_{c}}\sum_{u=n}^\infty\sum_{s=u}^\infty
|a_s|\le(2/(1-c)^{2}) c^{n}$ for all $n\in\N$,
\item[(iii)] $\sup_{a\in A'_{c}}\sum_{s=0}^\infty|b_s(a)|\le1+2c/(1-c)$.
\end{enumerate}
Thus, denoting by $\pr^{\phi_1,\theta_1}$ the law of the $\operatorname{ARMA}(1,1)$
process $(X_t^{\phi_1,\theta_1})$ based on the given noise $(Z_s)$
and with coefficients $\phi_1,\theta_1$, the discussion in Example~\ref{linear process} shows that the class $\mathcal{P}=\mathcal
{P}_{c}:=\{\pr^{\phi_1,\theta_1}\dvtx |\phi_1|,|\theta_1|\le c\mbox{
and }\phi_1\neq-\theta_1\}$ satisfies condition (c) in Theorem~\ref
{glivenko cantelli}, because $\mathcal{P}_{c}$ is nothing but the
class of laws $\pr^a$ on $\R^{\N}$ of all linear processes $\sum_{s=0}^\infty a_s Z_{t-s}$ with $a\in A_c'$.
%{ $\Diamond$}
\end{examplenorm}

Recall from \cite{Huber1981}, page 25, that the L\'{e}vy metric
%
%e6 #&#
\begin{equation}
\label{def levy metric} d_{\mbox{\scriptsize{{L\'{e}vy}}}}(\mu,\nu) := \inf\bigl\{\varepsilon>0 \dvtx
F_\mu (x-\varepsilon)-\varepsilon\le F_\nu(x)\le
F_\mu(x+\varepsilon )+\varepsilon\mbox{ for all }x\in\R\bigr\}
\end{equation}
(with $F_\mu,F_\nu$ the distribution functions of $\mu,\nu\in
\mathcal{M}_1(\R)$) generates the weak topology on $\mathcal{M}_1(\R
)$. Since the classical Kolmogorov metric $d_{(\eins)}$ dominates the
L\'{e}vy metric (cf. \cite{Huber1981}, page 34) and condition (b) in
Theorem~\ref{glivenko cantelli} is always fulfilled if $d_{(\phi
)}=d_{(\eins)}$ (recall Remark~\ref{remark on cond b}(i)), we
immediately obtain the following corollary to Theorem~\ref{glivenko
cantelli} with $\phi:=\eins$.

%co3.7 #&#
\begin{corollary}[(UGC property w.r.t. $\boldsymbol{d}_{\scriptsize{\textbf{{L\'{e}vy}}}}$)]\label{glivenko cantelli - corollary}
Let $\mathcal{P}$ be a class of probability measures on $(\Omega
,\mathcal{F})$ satisfying conditions \textup{(a)} and \textup{(c}) of Theorem~\ref
{glivenko cantelli}. Then we have (\ref{glivenko cantelli - eq}) with
$d_{(\phi)}$ replaced by $d_{\mbox{\scriptsize{{L\'{e}vy}}}}$.
\end{corollary}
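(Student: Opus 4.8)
The plan is to deduce the corollary directly from Theorem~\ref{glivenko cantelli} applied with the constant weight $\phi:=\eins$, combined with the elementary fact that the Kolmogorov metric dominates the L\'evy metric. Since only conditions (a) and (c) are assumed in the corollary, the first task is to check that the remaining hypothesis (b) of Theorem~\ref{glivenko cantelli} is automatically met for this particular choice of $\phi$.

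First I would observe that for $\phi=\eins$ the function $\phi$ is bounded, so that $\phi(y)\eins_{\phi(y)\ge K}=0$ for every $y$ as soon as $K>1$; hence condition (b) holds trivially, which is precisely the content of Remark~\ref{remark on cond b}\,(i). Together with the assumed conditions (a) and (c), all hypotheses of Theorem~\ref{glivenko cantelli} are therefore satisfied for $\phi=\eins$, and the theorem yields, for every $\delta>0$,
\[
    \lim_{n\to\infty}\,\sup_{\pr\in{\cal P}}\,\pr[\,d_{(\eins)}(\widehat m_{n},\pr_1)\ge\delta\,]\,=\,0,
\]
i.e.\ ${\cal P}$ admits the UGC property w.r.t.\ the classical Kolmogorov metric $d_{(\eins)}$.

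It then remains to transfer this statement from $d_{(\eins)}$ to $d_{\mbox{\scriptsize{\rm L\'evy}}}$. Here I would invoke the domination $d_{\mbox{\scriptsize{\rm L\'evy}}}\le d_{(\eins)}$ (cf.\ \cite[p.\,34]{Huber1981}), valid for every pair of probability measures in ${\cal M}_1(\R)$. Applied with $\mu=\widehat m_n(x)$ and $\nu=\pr_1$, it gives the pointwise event inclusion
\[
    \{x:\,d_{\mbox{\scriptsize{\rm L\'evy}}}(\widehat m_n(x),\pr_1)\ge\delta\}\,\subseteq\,\{x:\,d_{(\eins)}(\widehat m_n(x),\pr_1)\ge\delta\},
\]
whence $\pr[\,d_{\mbox{\scriptsize{\rm L\'evy}}}(\widehat m_n,\pr_1)\ge\delta\,]\le\pr[\,d_{(\eins)}(\widehat m_n,\pr_1)\ge\delta\,]$ for every $\pr\in{\cal P}$ and every $n\in\N$. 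Taking the supremum over $\pr\in{\cal P}$ and then letting $n\to\infty$ in the preceding display yields exactly (\ref{glivenko cantelli - eq}) with $d_{(\phi)}$ replaced by $d_{\mbox{\scriptsize{\rm L\'evy}}}$.

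There is essentially no genuine obstacle at this stage: all of the analytic work is already contained in Theorem~\ref{glivenko cantelli}, and the corollary reduces to a one-line monotonicity argument once that theorem is in hand. The only points worth a brief mention are that the inequality $d_{\mbox{\scriptsize{\rm L\'evy}}}\le d_{(\eins)}$ is a standard comparison between the two metrics, and that $x\mapsto d_{\mbox{\scriptsize{\rm L\'evy}}}(\widehat m_n(x),\pr_1)$ must be measurable so that the probabilities above are well defined; the latter is guaranteed by the measurability assumption imposed in the setup preceding Definition~\ref{def UGC property}.
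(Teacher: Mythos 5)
Your proposal is correct and follows exactly the paper's own argument: apply Theorem \ref{glivenko cantelli} with $\phi:=\eins$ (condition (b) holding trivially by Remark \ref{remark on cond b}\,(i)) and then transfer the conclusion via the domination $d_{\mbox{\scriptsize{\rm L\'evy}}}\le d_{(\eins)}$. Nothing is missing; your added remark on measurability is a harmless bonus.
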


Obviously, Example~\ref{linear process} remains valid (except the
estimate of $d_{(\eins)}(\pr_1^a,\pr_1^{a'})$ at the end) when
replacing the Kolmogorov metric $d_{(\eins)}$ by the L\'{e}vy metric
$d_{\mbox{\scriptsize{{L\'{e}vy}}}}$.

%%%%%%%%%%%%%%%%%%%%%%%%%%%%%%%%%%%%%%%%%%%%%%%%%%%%%%%%%%%%%%%%

%s3.2 #&#
\subsection{Strong mixing, and \texorpdfstring{$\psi$}{psi}-weak topology}\label{UGC psi
weak topology}

Let $E$ be a locally compact and second countable Hausdorff space (in
this case $E$ is in particular a Polish space), $\mathcal{E}$ be the
corresponding Borel $\sigma$-field, and $\psi\dvtx  E\to[0,\infty)$ be a
continuous function satisfying $\psi\ge1$ outside some compact set.
Let $\mathcal{M}_1^\psi(E)$ be the set of all probability measures
$\mu$ on $(E,\mathcal{E})$ satisfying $\int\psi \,\mathrm{d}\mu<\infty$, and
$C_\psi(E)$ be the space of all continuous functions on $E$ for which
$\|f/(1+\psi)\|_\infty<\infty$, where $\|g\|_\infty:=\sup_{y\in
E}|g(y)|$ for any function $g\dvtx E\to\R$. The \emph{$\psi$-weak
topology} on $\mathcal{M}_1^\psi(E)$ is the coarsest topology for
which the mappings $\mu\mapsto\int f \,\mathrm{d}\mu$, $f\in C_\psi(E)$, are
continuous; cf. Section A.6 in \cite{FoellmerSchied2004}. Clearly, the
$\psi$-weak topology is finer than the weak topology, and the two
topologies coincide if and only if $\psi$ is bounded. It follows from
Lemma~3.4(i)\,$\Leftrightarrow$\,(iii) in \cite{Kraetschmeretal2012a}
(which still holds when replacing $\R^d$ by some Polish space) that
the metric
%
%e7 #&#
\begin{equation}
\label{def psi weak metric} d_{\psi,\mathrm{{vag}}}(\mu,\nu) := d_{\mathrm{{vag}}}(\mu,\nu )+ \biggl
\llvert \int\psi \,\mathrm{d}\mu-\int\psi \,\mathrm{d}\nu \biggr\rrvert , \qquad\mu,\nu\in\mathcal
{M}_1^\psi(E)
\end{equation}
metrizes the $\psi$-weak topology when $d_{\mathrm{{vag}}}$ metrizes
the vague topology on $\mathcal{M}_1^\psi(E)$. For $d_{\mathrm
{{vag}}}$ we may and do choose
%
%e8 #&#
\begin{equation}
\label{def vague metric} d_{\mathrm{{vag}}}(\mu,\nu) := \sum_{k=1}^\infty
\frac
{1}{2^{k}} \biggl\{1\wedge \biggl|\int f_k \,\mathrm{d}\mu-\int f_k
\,\mathrm{d}\nu \biggr| \biggr\},\qquad \mu,\nu\in\mathcal{M}_1^\psi(E)
\end{equation}
for some countable and $\|\cdot\|_\infty$-dense subset $\{f_k\}_{k\in
\N}$ of the space $C_{\mathsf c}(E)$ of all continuous functions on
$E$ with compact support; cf. the proof of Theorem~31.5 in \cite{Bauer2001}.

%re3.8 #&#
\begin{remarknorm}
Any locally compact and second countable Hausdorff space $E$ is $\sigma
$-compact (cf. Example~2 in Section~29 of \cite{Bauer2001}), that is,
there exists a sequence $(K_n)$ of compact subsets of $E$ such that
$K_n\uparrow E$ and every compact set $K$ is contained in finally all
$K_n$. So by Urysohn's lemma, one can find functions $e_n\in C_{\mathsf
c}(E)$, $n\in\N$, such that $0\le e_n\le1$ and $e_n=1$ on $K_n$ for
all $n\in\N$. If $\{\widetilde{f}_l\}_{l\in\N}$ denotes any
countable and $\|\cdot\|_{\infty}$-dense subset of $C_{\mathsf
c}(E)$, then the set $\{f_k\}_{k\in\N}$ in (\ref{def vague metric})
can be chosen as
\[
\{f_k\}_{k\in\N} := \{\widetilde{f}_l
\}_{l\in\N}\cup\{\widetilde {f}_l e_n
\}_{l,n\in\N}\cup\{e_n\}_{n\in\N}.
\]
This gets clear from the elaborations in the proof of Theorem~31.5 in
\cite{Bauer2001}.
%{ $\Diamond$}
\end{remarknorm}

The co-variance functional $T(\mu):=\int_{\R^2}(x_1-\int_\R x \mu
_{1}(\mathrm{d}x))(x_2-\int_\R x \mu_{2}(\mathrm{d}x)) \mu (\mathrm{d}(x_1,x_2))$
($\mu_{1}$ and $\mu_{2}$ denote the marginal distributions of $\mu
$), for instance, is clearly $\psi$-weakly continuous for $\psi
(x):=|x|^2$ and $E=\R^2$. So, in view of the Hampel-type Theorem~\ref
{hampel-huber generalized}, for a given class $\mathcal{P}$ of
probability measures on $(\Omega,\mathcal{F})$, the corresponding
sequence $(\widehat T_n)$ of plug-in estimators, that is, the sequence
of the sample co-variances, is qualitatively $\mathcal{P}$-marginally
robust w.r.t. $(d_{\psi,\mathrm{{vag}}},d_{\mathrm{{Proh}}}')$ at
any $\pr\in\mathcal{P}$ if $\mathcal{P}$ admits the UGC property
w.r.t. $d_{\psi,\mathrm{{vag}}}$ in the sense of Definition~\ref{def
UGC property}. The following Theorem~\ref{glivenko cantelli - weakl
topology} shows that the latter is true under similar conditions as
imposed in Theorem~\ref{glivenko cantelli}.

%th3.9 #&#
\begin{theorem}[(UGC property w.r.t. $\bolds{d}_{\boldsymbol{\psi}\boldsymbol{,}\mathbf
{{vag}}}$)]\label{glivenko cantelli - weakl topology}
Let $E$ be a locally compact and second countable Hausdorff space,
$\mathcal{E}$ be the corresponding Borel $\sigma$-field, and $\psi
\dvtx E\to[0,\infty)$ be a continuous function satisfying $\psi\ge1$
outside some compact set. Let $\mathcal{P}$ be a class of probability
measures on $(\Omega,\mathcal{F})$ such that
\begin{enumerate}[(b)]
\item[(a)] $\pr_1=\pr_2=\cdots$ for all $\pr\in\mathcal{P}$,
\item[(b)] $\lim_{K\to\infty}\sup_{\pr\in\mathcal{P}}\int\psi
(y)\eins_{\psi(y)\ge K} \pr_1(\mathrm{d}y)=0$,
%$\sup_{\pr\in\mathcal{P}}\int
%\phi^2 d\pr_1<\infty$,
%
\item[(c)] $\lim_{n\to\infty}\sup_{\pr\in\mathcal{P}}\alpha
_\pr(n)=0$.
\end{enumerate}
Then, for every $\delta>0$,
%
%e9 #&#
\begin{equation}
\label{glivenko cantelli - weakl topology - eq} \lim_{n\to\infty} \sup_{\pr\in\mathcal{P}} \pr
\bigl[ d_{\psi
,\mathrm{{vag}}}(\widehat m_{n},\pr_1)\ge\delta
\bigr] = 0.
\end{equation}
\end{theorem}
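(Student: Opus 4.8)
The plan is to exploit the additive structure of the metric $d_{\psi,\mbox{\scriptsize{\rm vag}}}$ in (\ref{def psi weak metric}) and to handle its two summands separately. Fix $\delta>0$. Since
$$
    d_{\psi,\mbox{\scriptsize{\rm vag}}}(\widehat m_n,\pr_1)\,=\,d_{\mbox{\scriptsize{\rm vag}}}(\widehat m_n,\pr_1)+\Big|\int\psi\,d\widehat m_n-\int\psi\,d\pr_1\Big|,
$$
it suffices to show that both $\sup_{\pr\in{\cal P}}\pr[d_{\mbox{\scriptsize{\rm vag}}}(\widehat m_n,\pr_1)\ge\delta/2]$ and $\sup_{\pr\in{\cal P}}\pr[\,|\int\psi\,d\widehat m_n-\int\psi\,d\pr_1|\ge\delta/2\,]$ tend to $0$ as $n\to\infty$. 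The common engine for both parts will be the uniform weak law of large numbers from the Appendix \ref{appendix rio}: under (a) and (c), for any fixed bounded measurable $g:E\to\R$ one has $\lim_{n\to\infty}\sup_{\pr\in{\cal P}}\pr[\,|\frac1n\sum_{i=1}^ng(X_i)-\int g\,d\pr_1|\ge\eta\,]=0$ for every $\eta>0$, where I have used that $\int g\,d\widehat m_n=\frac1n\sum_{i=1}^ng(X_i)$ and that $\int g\,d\pr_1=\int g\,d\pr_i$ for all $i$ by the stationarity assumption (a).

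For the vague part I would use the geometric decay of the weights in (\ref{def vague metric}). Choosing $K=K(\delta)\in\N$ with $\sum_{k>K}2^{-k}<\delta/4$, the tail of the series defining $d_{\mbox{\scriptsize{\rm vag}}}$ beyond $K$ never exceeds $\delta/4$, and since $1\wedge a\le a$ for every $a\ge0$, the event $\{d_{\mbox{\scriptsize{\rm vag}}}(\widehat m_n,\pr_1)\ge\delta/2\}$ is contained in $\bigcup_{k=1}^K\{|\int f_k\,d\widehat m_n-\int f_k\,d\pr_1|\ge\delta/4\}$. Each $f_k$ lies in $C_{\sf c}(E)$ and is hence bounded, so the uniform weak LLN applies to each of the finitely many functions $f_1,\dots,f_K$, and a union bound gives
$$
    \sup_{\pr\in{\cal P}}\pr[d_{\mbox{\scriptsize{\rm vag}}}(\widehat m_n,\pr_1)\ge\delta/2]\,\le\,\sum_{k=1}^K\sup_{\pr\in{\cal P}}\pr\Big[\,\Big|\frac1n\sum_{i=1}^nf_k(X_i)-\int f_k\,d\pr_1\Big|\ge\frac{\delta}{4}\Big]\,\longrightarrow\,0.
$$

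The $\psi$-integral part is where the main obstacle lies, because $\psi$ need not be bounded while the uniform weak LLN is only available for bounded integrands. I would resolve this by truncation combined with condition (b). For $M>0$ write $\psi=(\psi\wedge M)+(\psi-M)^+$, so that
$$
    \Big|\int\psi\,d\widehat m_n-\int\psi\,d\pr_1\Big|\le\Big|\int(\psi\wedge M)\,d\widehat m_n-\int(\psi\wedge M)\,d\pr_1\Big|+\int(\psi-M)^+\,d\widehat m_n+\int(\psi-M)^+\,d\pr_1.
$$
Here $\psi\wedge M$ is bounded and continuous, so the uniform weak LLN makes the first term $<\delta/6$ with probability tending to $1$ uniformly in $\pr$. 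For the empirical tail I would invoke Markov's inequality together with stationarity (a),
$$
    \sup_{\pr\in{\cal P}}\pr\Big[\int(\psi-M)^+\,d\widehat m_n\ge\frac{\delta}{6}\Big]\,\le\,\frac{6}{\delta}\,\sup_{\pr\in{\cal P}}\int(\psi-M)^+\,d\pr_1\,\le\,\frac{6}{\delta}\,\sup_{\pr\in{\cal P}}\int\psi\,\eins_{\psi\ge M}\,d\pr_1,
$$
and the same bound $\int(\psi-M)^+\,d\pr_1\le\int\psi\,\eins_{\psi\ge M}\,d\pr_1$ controls the deterministic tail. By condition (b) both right-hand sides can be made $<\delta/6$ uniformly in $\pr$ (and, crucially, uniformly in $n$) by fixing $M$ large. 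The argument then concludes by first choosing $M$ large to kill the two tails and only afterwards letting $n\to\infty$ to dispose of the truncated part; adding the three contributions together with the vague part yields (\ref{glivenko cantelli - weakl topology - eq}). The one genuinely delicate point is this order of limits in $M$ and $n$, which is legitimate precisely because the Markov bound above is uniform in $n$.
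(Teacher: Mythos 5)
Your proposal is correct and takes essentially the same route as the paper: the same decomposition of $d_{\psi,\mbox{\scriptsize{\rm vag}}}$ into finitely many vague terms $\big|\int f_k\,d\widehat m_n-\int f_k\,d\pr_1\big|$ plus the $\psi$-integral term, each handled by the uniform weak LLN of Appendix \ref{appendix rio} under assumptions (a) and (c). The only deviation is that you truncate $\psi$ by hand before invoking the LLN, which is redundant: Corollary \ref{probability bound} applies directly to the unbounded variables $\psi(X_i)$, since its uniform-integrability hypothesis (\ref{probability bound - cond - 1}) is precisely assumption (b); the truncation you carry out externally is exactly the argument performed inside that corollary's proof.
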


The proof of Theorem~\ref{glivenko cantelli} can be found in Section~\ref{proof of glivenko cantelli - weakl topology}. Notice that (c)
implies in particular that the coordinate process $(X_i)$ is strongly
mixing under every $\pr\in\mathcal{P}$.

%re3.10 #&#
\begin{remarknorm}\label{remark on cond b - psi}
(i) Notice that condition (b) is always fulfilled if $\psi$ is
bounded, that is, if $d_{\psi,\mathrm{{vag}}}$ metrizes the classical
weak topology.

(ii) In the case $E=\R$, condition (b) is fulfilled if there is some
$w\dvtx \R\to[0,\infty)$ such that $\lim_{|x|\to\infty}w(x)/\phi
(x)=\infty$ and $\sup_{\pr\in\mathcal{P}}\int w(y) \pr
_1(\mathrm{d}y)<\infty$; cf. \cite{Kraetschmeretal2012b}, Lemma~4.1.
%{ $\Diamond$}
\end{remarknorm}

Recall from (\ref{def levy metric}) the definition of the L\'{e}vy
metric $d_{\mbox{\scriptsize{{L\'{e}vy}}}}$. Since $d_{\mbox{\scriptsize{{L\'{e}vy}}}}$
generates the weak topology on $\mathcal{M}_1(\R)$, the metric
%
%e10 #&#
\begin{equation}
\label{def levy psi metric} d_{\psi,\mbox{\scriptsize{{L\'{e}vy}}}}(\mu,\nu) := d_{\mbox{\scriptsize{{L\'{e}vy}}}}(\mu,\nu)+ \biggl
\llvert \int\psi \,\mathrm{d}\mu-\int\psi \,\mathrm{d}\nu \biggr\rrvert ,\qquad \mu ,\nu\in
\mathcal{M}_1^\psi(\R)
\end{equation}
generates the $\psi$-weak topology on $\mathcal{M}_1^\psi(\R)$. The
following corollary is an immediate consequence of Theorem~\ref
{glivenko cantelli - weakl topology} (in fact of (\ref{glivenko
cantelli - weakl topology - eq - proof 2}) in its proof) and Corollary~\ref{glivenko cantelli - corollary}.

%co3.11 #&#
\begin{corollary}[(UGC property w.r.t. $\bolds{d}_{\boldsymbol{\psi}\boldsymbol{,}\scriptsize{\textbf{{L\'{e}vy}}}}$)]
Let $E=\R$, and $\mathcal{P}$ be a class of probability measures on
$(\Omega,\mathcal{F})$ satisfying conditions \textup{(a)--(c)} of Theorem~\ref
{glivenko cantelli - weakl topology}. Then we have (\ref{glivenko
cantelli - weakl topology - eq}) with $d_{\psi,\mathrm{{vag}}}$
replaced by $d_{\psi,\mbox{\scriptsize{{L\'{e}vy}}}}$.
\end{corollary}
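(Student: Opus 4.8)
The plan is to exploit that the $\psi$-L\'evy metric and the $\psi$-vague metric differ only in their first summand: by (\ref{def levy psi metric}) and (\ref{def psi weak metric}) both are the sum of a metric generating the (weak, resp.\ vague) topology and the \emph{same} discrepancy term $|\int\psi\,d\mu-\int\psi\,d\nu|$. Concretely, I would write
$$
  d_{\psi,\mbox{\scriptsize{\rm L\'evy}}}(\widehat m_n,\pr_1)\,=\,d_{\mbox{\scriptsize{\rm L\'evy}}}(\widehat m_n,\pr_1)+\Big|\int\psi\,d\widehat m_n-\int\psi\,d\pr_1\Big|,
$$
so that, since both summands are nonnegative, the event $\{d_{\psi,\mbox{\scriptsize{\rm L\'evy}}}(\widehat m_n,\pr_1)\ge\delta\}$ is contained in $\{d_{\mbox{\scriptsize{\rm L\'evy}}}(\widehat m_n,\pr_1)\ge\delta/2\}\cup\{\,|\int\psi\,d\widehat m_n-\int\psi\,d\pr_1|\ge\delta/2\,\}$. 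A union bound then yields, for every $\delta>0$,
$$
  \sup_{\pr\in{\cal P}}\pr\big[d_{\psi,\mbox{\scriptsize{\rm L\'evy}}}(\widehat m_n,\pr_1)\ge\delta\big]\,\le\,\sup_{\pr\in{\cal P}}\pr\big[d_{\mbox{\scriptsize{\rm L\'evy}}}(\widehat m_n,\pr_1)\ge\tfrac{\delta}{2}\big]+\sup_{\pr\in{\cal P}}\pr\Big[\Big|\int\psi\,d\widehat m_n-\int\psi\,d\pr_1\Big|\ge\tfrac{\delta}{2}\Big].
$$

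It then remains only to send $n\to\infty$ and observe that both terms on the right have already been handled elsewhere. For the first term I would invoke Corollary \ref{glivenko cantelli - corollary}: its hypotheses are conditions (a) and (c) of Theorem \ref{glivenko cantelli}, which coincide verbatim with conditions (a) and (c) of Theorem \ref{glivenko cantelli - weakl topology}, and condition (b) is not needed there; hence the L\'evy-metric UGC property holds and the first supremum tends to $0$. For the second term I would quote (\ref{glivenko cantelli - weakl topology - eq - proof 2}) from the proof of Theorem \ref{glivenko cantelli - weakl topology}, which — using condition (b) — already establishes the UGC property for the $\psi$-integration functional, i.e.\ $\lim_{n\to\infty}\sup_{\pr\in{\cal P}}\pr[\,|\int\psi\,d\widehat m_n-\int\psi\,d\pr_1|\ge\delta/2\,]=0$. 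Because this discrepancy term is literally identical in $d_{\psi,\mbox{\scriptsize{\rm vag}}}$ and in $d_{\psi,\mbox{\scriptsize{\rm L\'evy}}}$, that estimate transfers without any change, and letting $n\to\infty$ in the displayed bound gives (\ref{glivenko cantelli - weakl topology - eq}) with $d_{\psi,\mbox{\scriptsize{\rm vag}}}$ replaced by $d_{\psi,\mbox{\scriptsize{\rm L\'evy}}}$.

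I do not anticipate a genuine obstacle: the statement is a recombination of two facts already in hand, the L\'evy/Kolmogorov estimate of Corollary \ref{glivenko cantelli - corollary} and the $\psi$-integral estimate (\ref{glivenko cantelli - weakl topology - eq - proof 2}). The only points worth a line of verification are bookkeeping ones — namely that passing from $d_{\mbox{\scriptsize{\rm vag}}}$ to $d_{\mbox{\scriptsize{\rm L\'evy}}}$ leaves the second summand (and hence the portion of the proof of Theorem \ref{glivenko cantelli - weakl topology} producing (\ref{glivenko cantelli - weakl topology - eq - proof 2})) untouched, and that the measurability requirement of Section \ref{section def m robustness} persists for $d_{\psi,\mbox{\scriptsize{\rm L\'evy}}}$ on $E=\R$, which is immediate since both $x\mapsto d_{\mbox{\scriptsize{\rm L\'evy}}}(\widehat m_n(x),\pr_1)$ and $x\mapsto\int\psi\,d\widehat m_n(x)$ are measurable. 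All the real analytic work, the uniform weak law of large numbers underlying both component estimates, has been done upstream.
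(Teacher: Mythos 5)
Your proposal is correct and follows essentially the same route as the paper, which likewise obtains the corollary by combining Corollary \ref{glivenko cantelli - corollary} (for the L\'evy summand, using only conditions (a) and (c)) with the estimate (\ref{glivenko cantelli - weakl topology - eq - proof 2}) from the proof of Theorem \ref{glivenko cantelli - weakl topology} (for the $\psi$-integral summand). The union-bound decomposition you spell out is exactly what the paper leaves implicit in declaring the corollary an ``immediate consequence'' of these two facts.
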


%ex3.12 #&#
\begin{examplenorm}
(i) It was already mentioned in Example~\ref{remark on cond b - role
of phi}(i) that for fixed $\alpha\in(0,1)$ the lower $\alpha
$-quantile functional is continuous at $\mu\in\mathcal{M}_1(\R)$
w.r.t. the classical weak topology, that is, w.r.t. the $\psi$-weak
topology with $\psi=\eins$, provided $F^{\leftarrow}_{\mu}$ is
continuous at $\alpha$.

(ii) It is easily seen that the mean functional and the variance
functional are continuous w.r.t. the $\psi$-weak topology on $\mathcal
{M}_1^\psi(\R)$ for $\psi=|x|$ and $\psi=|x|^2$, respectively.

(iii) It is demonstrated in \cite{Kraetschmeretal2012b} that the
statistical functional $T(\mu)=\rho(X_\mu)$ corresponding to any
law-invariant convex risk measure $\rho$ defined on an Orlicz space
with continuous Young function $\Psi$ satisfying the $\Delta
_2$-condition (meaning that there are $C,x_0>0$ such that $\Psi(2x)\le
C\Psi(x)$ for all $x\ge x_0$) is continuous w.r.t. the $\psi$-weak
topology on $\mathcal{M}_1^\psi(\R)$ for $\psi(\cdot)=\Psi(|\cdot|)$.
%{ $\Diamond$}
\end{examplenorm}

%%%%%%%%%%%%%%%%%%%%%%%%%%%%%%%%%%%%%%%%%%%%%%%%%%%%%%%%%%%%%%%%

%s4 #&#
\section{Proof of Theorem \texorpdfstring{\protect\ref{hampel-huber generalized}}{2.4}}\label
{proof of hampel-huber generalized}

We adapt the proof of the classical Hampel theorem as given in \cite
{Huber1981,HuberRonchetti2009}. For the reader's convenience, we first
of all recall Strassen's theorem (as formulated in Theorem~2.4.7 in
\cite{Huber1981}; the proof is contained in the seminal paper \cite
{Strassen1965}) whose implication (ii)$\,\Rightarrow\,$(i) is the key for
the proof of Theorem~\ref{hampel-huber generalized}.

%th4.1 #&#
\begin{theorem}[(Strassen)]\label{strassens theorem}
Let $\T$ be a Polish space equipped with the corresponding Borel
$\sigma$-field $\mathcal{T}$, and $d_\T$ be any complete and
separable metric generating the topology on $\T$. Then, for any two
probability measures $\mu_1,\mu_2$ on $(\T,\mathcal{T})$ and any
$\varepsilon,\delta>0$, the following two statements are equivalent:
\begin{enumerate}[(ii)]
\item[(i)] For every $A\in\mathcal{T}$ we have
\[
\mu_1[A] \le\mu_2\bigl[A^\delta\bigr]+
\varepsilon,
\]
where $A^\delta:=\{t\in\T\dvtx  \inf_{a\in A}d_\T(t,a)\le\delta\}$.
\item[(ii)] There is some probability measure $\mu$ on $(\T\times\T
,\mathcal{T}\times\mathcal{T})$ such that $\mu\circ\pi_1^{-1}=\mu
_1$, $\mu\circ\pi_2^{-1}=\mu_2$, and
\[
\mu \bigl[ \bigl\{(t_1,t_2)\in\T\times\T\dvtx
d_\T(t_1,t_2)\le\delta \bigr\} \bigr] \ge1-
\varepsilon,
\]
where $\pi_i\dvtx \T\times\T\to\T$ denotes the projection on the $i$th
coordinate, $i=1,2$.
\end{enumerate}
\end{theorem}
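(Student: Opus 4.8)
The plan is to prove the two implications separately, with (ii)$\Rightarrow$(i) being routine and (i)$\Rightarrow$(ii) carrying all the content.

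First I would dispatch (ii)$\Rightarrow$(i). Given a coupling $\mu$ on $(\T\times\T,{\cal T}\times{\cal T})$ with $\mu\circ\pi_1^{-1}=\mu_1$, $\mu\circ\pi_2^{-1}=\mu_2$ and $\mu[\{d_\T\le\delta\}]\ge 1-\varepsilon$, fix $A\in{\cal T}$ and split $A\times\T$ according to whether $d_\T(t_1,t_2)\le\delta$. On the part where $d_\T(t_1,t_2)\le\delta$ and $t_1\in A$ one has $t_2\in A^\delta$, so that part of the mass is at most $\mu[\T\times A^\delta]=\mu_2[A^\delta]$; the complementary part has mass at most $\mu[\{d_\T>\delta\}]\le\varepsilon$. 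Adding the two bounds gives $\mu_1[A]=\mu[A\times\T]\le\mu_2[A^\delta]+\varepsilon$, which is (i).

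The substance is (i)$\Rightarrow$(ii), i.e.\ constructing a coupling that concentrates mass $1-\varepsilon$ on the closed set $\{d_\T\le\delta\}$. I would do this by a discretise-and-pass-to-the-limit argument. Since $\T$ is Polish, $\mu_1$ and $\mu_2$ are tight; for a small parameter $\eta>0$ I would cover a compact set carrying all but an arbitrarily small amount of both marginals by finitely many disjoint Borel pieces of diameter $<\eta$, pick a representative point in each, and push $\mu_1,\mu_2$ onto these points to get finitely supported approximations $\mu_1^\eta,\mu_2^\eta$. On the resulting finite state spaces the task becomes a transportation-feasibility problem: for a slightly enlarged threshold $\delta'=\delta+c\eta$ one wants a nonnegative matrix with row sums $\mu_1^\eta$ and column sums $\mu_2^\eta$ that routes total mass at least $1-\varepsilon$ along the pairs at distance $\le\delta'$. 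Existence of such a plan is governed by the max-flow/min-cut theorem (equivalently the Hall--Hoffman feasibility criterion for bipartite transportation): a plan exists iff for every set $A$ of source atoms the deficiency $\mu_1^\eta[A]-\mu_2^\eta[N_{\delta'}(A)]$ is at most $\varepsilon$, where $N_{\delta'}(A)$ collects the sink atoms within distance $\delta'$ of $A$. I would verify this deficiency inequality from the hypothesised (i): for small $\eta$ the union of source pieces indexed by $A$ is a Borel set $\widetilde A$ with $\mu_1^\eta[A]\approx\mu_1[\widetilde A]$, while $N_{\delta'}(A)$ captures every sink atom whose piece meets $\widetilde A^\delta$, so $\mu_2^\eta[N_{\delta'}(A)]\gtrsim\mu_2[\widetilde A^\delta]$, and (i) applied to $\widetilde A$ closes the gap up to the discretisation error.

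This produces, for each small $\eta$, a coupling $\mu^\eta$ of $\mu_1^\eta,\mu_2^\eta$ supported on representative pairs at distance $\le\delta'$, hence concentrating mass at least $1-\varepsilon-r_\eta$, with $r_\eta\to0$, on the actual-point set $\{d_\T\le\delta+c''\eta\}$ for a constant $c''$. The family $\{\mu^\eta\}$ is tight, since its marginals converge weakly to $\mu_1,\mu_2$, so along a subsequence $\eta\to0$ it converges weakly to some $\mu$ on $\T\times\T$; this limit has marginals $\mu_1,\mu_2$, and since $\{d_\T\le\delta\}$ is closed the portmanteau theorem together with $\delta+c''\eta\downarrow\delta$ yields $\mu[\{d_\T\le\delta\}]\ge1-\varepsilon$, which is (ii). The main obstacle is the bookkeeping in this limit: one must align the enlargement of the target threshold forced by the discretisation with the portmanteau inequality for closed sets, and check that the finite deficiency bound really follows from (i) uniformly over all source sets $A$ as $\eta\to0$. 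A cleaner but heavier alternative is to note that $\eins_{\{d_\T>\delta\}}$ is a lower semicontinuous cost and invoke Kantorovich duality, reducing (ii) to showing that (i) forces every dual-admissible pair $(f,g)$ with $f(t_1)+g(t_2)\le\eins_{\{d_\T(t_1,t_2)>\delta\}}$ to satisfy $\int f\,d\mu_1+\int g\,d\mu_2\le\varepsilon$.
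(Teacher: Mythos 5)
The paper does not actually prove this statement: it is imported verbatim from Theorem 2.4.7 of \cite{Huber1981}, with the proof attributed to the original paper \cite{Strassen1965}, and only the implication (ii)$\Rightarrow$(i) is used later (in Section 4). So there is no in-paper argument to compare against; judged on its own, your proposal is sound, and it is essentially the classical proof one finds in Huber's book: reduce to finitely supported marginals by tightness, solve the resulting transportation-feasibility problem by max-flow/min-cut (equivalently the marriage/Hall--Hoffman criterion), verify the cut condition from (i) applied to the union $\widetilde A$ of the source pieces at the enlarged threshold $\delta'=\delta+c\eta$, and pass to a weak limit. Your easy direction (ii)$\Rightarrow$(i) is exactly the standard two-piece estimate $\mu_1[A]=\mu[A\times\T]\le\mu[\T\times A^\delta]+\mu[\{d_\T>\delta\}]$ and is correct. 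The limit bookkeeping you flag does close properly: for fixed $\eta_0$ the set $\{d_\T\le\delta+c''\eta_0\}$ is closed, so portmanteau along the subsequence gives $\mu[\{d_\T\le\delta+c''\eta_0\}]\ge 1-\varepsilon$, and continuity from above as $\eta_0\downarrow 0$ yields (ii) with the exact $\delta$ and $\varepsilon$. Two points deserve the explicit care you promise: the mass of $\mu_1,\mu_2$ outside the compact set must be coupled off arbitrarily (contributing to $r_\eta$) so that the marginals of $\mu^\eta$ are exactly $\mu_1^\eta,\mu_2^\eta$ before taking limits, and min-cut only produces a \emph{partial} plan of mass $\ge 1-\varepsilon-r_\eta$ on admissible pairs, which must then be completed to a full coupling. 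Your closing alternative via Kantorovich duality for the lower semicontinuous cost $\eins_{\{d_\T>\delta\}}$ is a legitimate second route and is closer in spirit to Strassen's original functional-analytic argument.
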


To prove Theorem~\ref{hampel-huber generalized}, we have to show that
for every $\varepsilon>0$ there are some $\delta>0$ and $n_0\in\N$
such that
%
%e11 #&#
\begin{equation}
\label{proof of hampel-huber generalized - eq - 1} \Q\in\mathcal{P},\qquad d(\pr_1,\Q_1)\le
\delta\quad\Longrightarrow\quad d_{\mathrm{{Proh}}}'\bigl(\pr\circ
\widehat T_n^{-1},\Q\circ\widehat T_n^{-1}
\bigr)\le\varepsilon\qquad\forall n\ge n_0.
\end{equation}
Since
\[
d_{\mathrm{{Proh}}}'\bigl(\pr\circ\widehat T_n^{-1},
\Q\circ\widehat T_n^{-1}\bigr) \le d_{\mathrm{{Proh}}}'
\bigl(\pr\circ\widehat T_n^{-1},\delta_{T(\pr_1)}\bigr)
+ d_{\mathrm{{Proh}}}'\bigl(\delta_{T(\pr_1)},\Q\circ\widehat
T_n^{-1}\bigr)
\]
(with $\delta_{T(\pr_1)}$ the dirac measure on $(\T,\mathcal{T})$
with atom $T(\pr_1)$), for (\ref{proof of hampel-huber generalized -
eq - 1}) it suffices to show that for every $\varepsilon>0$ there are
some $\delta>0$ and $n_0\in\N$ such that
%
%e12 #&#
\begin{equation}
\label{proof of hampel-huber generalized - eq - 2} \Q\in\mathcal{P},\qquad d(\pr_1,\Q_1)\le
\delta\quad\Longrightarrow\quad d_{\mathrm{{Proh}}}'\bigl(
\delta_{T(\pr_1)},\Q\circ\widehat T_n^{-1}\bigr)\le
\varepsilon/2 \qquad\forall n\ge n_0.
\end{equation}
The remainder of the proof is divided into two steps. In Step 1, we
will verify that for (\ref{proof of hampel-huber generalized - eq -
2}) it suffices to show that for every $\varepsilon>0$ there are some
$\delta>0$ and $n_0\in\N$ such that
%
%e13 #&#
\begin{eqnarray}
\label{proof of hampel-huber generalized - eq - 3}
\Q\in\mathcal{P},\qquad d(\pr_1,\Q_1)\le
\delta&\quad\Longrightarrow \quad&\Q \biggl[ \biggl\{x\in\Omega\dvtx
d_\T\bigl(T(\pr_1),\widehat T_n(x)\bigr)\le
\frac
{\varepsilon}{2} \biggr\} \biggr]\ge1-\frac{\varepsilon}{2}
\nonumber
\\[-8pt]
\\[-8pt]
&&\quad\forall n\ge
n_0.\nonumber
\end{eqnarray}
In Step 2, we will verify (\ref{proof of hampel-huber generalized - eq
- 3}).

\emph{Step} 1. We note that the right-hand side in (\ref{proof of
hampel-huber generalized - eq - 3}) is equivalent to
%
%e14 #&#
\begin{equation}
\label{proof of hampel-huber generalized - eq - 3 - 5} \bigl(\delta_{T(\pr_1)}\times\bigl(\Q\circ\widehat
T_n^{-1}\bigr) \bigr) \biggl[ \biggl\{(t_1,t_2)
\in\T\times\T\dvtx d_\T(t_1,t_2)\le
\frac
{\varepsilon}{2} \biggr\} \biggr]\ge1-\frac{\varepsilon}{2} \qquad\forall n\ge
n_0.
\end{equation}
In view of the implication (ii)$\,\Rightarrow\,$(i) in Strassen's Theorem~\ref{strassens theorem} (with $\mu:=\delta_{T(\pr_1)}\times(\Q
\circ\widehat T_n^{-1})$ and $\widetilde\varepsilon:=\widetilde
\delta:=\varepsilon/2$), condition (\ref{proof of hampel-huber
generalized - eq - 3 - 5}) implies
\[
\delta_{T(\pr_1)}[A] \le\Q\circ\widehat T_n^{-1}
\bigl[A^{\varepsilon
/2}\bigr]+\varepsilon/2 \qquad\forall A\in\mathcal{T}, n\ge
n_0,
\]
that is
\[
d_{\mathrm{{Proh}}}'\bigl(\delta_{T(\pr_1)},\Q\circ\widehat
T_n^{-1}\bigr) \le\varepsilon/2 \qquad\forall n\ge
n_0.
\]
That is, the right-hand side in (\ref{proof of hampel-huber
generalized - eq - 3}) implies the right-hand side in (\ref{proof of
hampel-huber generalized - eq - 2}).

\emph{Step} 2. To verify (\ref{proof of hampel-huber generalized - eq
- 3}), we pick $\varepsilon>0$. Since $T$ is $(d,d_\T)$-continuous at
$\pr_1$, we can find some $\delta>0$ such that for every $x\in\Omega
$ and $n\in\N$
%
%e15 #&#
\begin{equation}
\label{proof of hampel-huber generalized - eq - 4} d\bigl(\pr_1,\widehat m_n(x)\bigr)\le2
\delta\quad\Longrightarrow\quad d_\T\bigl(T(\pr _1),T
\bigl(\widehat m_n(x)\bigr)\bigr)\le\varepsilon/2;
\end{equation}
recall that the class of all empirical probability measures $\widehat
m_n(x)$ was assumed to be contained in~$\mathcal{M}$. Now, for any $\Q
\in\mathcal{P}$ satisfying $d(\pr_1,\Q_1)\le\delta$, we obtain
with the help of the UGC property of $\mathcal{P}$ w.r.t. $d$, of the
implication
\[
d\bigl(\Q_1,\widehat m_n(x)\bigr)\le\delta\quad
\Longrightarrow\quad d\bigl(\pr_1,\widehat m_n(x)\bigr)
\bigl[\le d(\pr_1,\Q_1)+d\bigl(\Q_1,\widehat
m_n(x)\bigr)\bigr] \le d(\pr_1,\Q _1)+\delta
\]
and of (\ref{proof of hampel-huber generalized - eq - 4}) that
\begin{eqnarray*}
1-\frac{\varepsilon}{2} & \le& \Q \bigl[ \bigl\{x\in\Omega\dvtx d\bigl(
\Q_1,\widehat m_n(x)\bigr)\le \delta \bigr\} \bigr]
\\
& \le& \Q \bigl[ \bigl\{x\in\Omega\dvtx d\bigl(\pr_1,\widehat
m_n(x)\bigr)\le d(\pr_1,\Q_1)+\delta \bigr\}
\bigr]
\\
& \le& \Q \bigl[ \bigl\{x\in\Omega\dvtx d\bigl(\pr_1,\widehat
m_n(x)\bigr)\le 2\delta \bigr\} \bigr]
\\
& \le& \Q \bigl[ \bigl\{x\in\Omega\dvtx d_\T\bigl(T(
\pr_1),T\bigl(\widehat m_n(x)\bigr)\bigr)\le\varepsilon/2
\bigr\} \bigr]
\\
& = & \Q \bigl[ \bigl\{x\in\Omega\dvtx d_\T\bigl(T(
\pr_1),\widehat T_n(x)\bigr)\le \varepsilon/2 \bigr\}
\bigr]
\end{eqnarray*}
for all $n\ge n_0$ and some $n_0=n_0(\varepsilon)\in\N$, where $n_0$
can be chosen independently of $\Q$ by the UGC property of $\mathcal
{P}$. Thus, (\ref{proof of hampel-huber generalized - eq - 3}) holds.
This completes the proof of Theorem~\ref{hampel-huber generalized}.

%%%%%%%%%%%%%%%%%%%%%%%%%%%%%%%%%%%%%%%%%%%%%%%%%%%%%%%%%%%%%%%%

%s5 #&#
\section{Proof of Theorem \texorpdfstring{\protect\ref{glivenko cantelli}}{3.1}}\label{proof of
glivenko cantelli}

We will only show that for every $\delta>0$
%
%e16 #&#
\begin{equation}
\label{appendix a2 - eq 0} \lim_{n\to\infty} \sup_{\pr\in\mathcal{P}} \pr
\Bigl[\sup_{y\in
(-\infty,0]}\bigl\llvert \widehat F_n(y)-F_{\pr}(y)
\bigr\rrvert \phi(y) \ge\delta \Bigr] = 0,
\end{equation}
where $\widehat F_n$ and $F_{\pr}$ denote the empirical distribution
function of $X_1,\ldots,X_n$ and the distribution function of the
marginal distribution $\pr_1$, respectively. The analogous result for
the positive real line can be shown in the same way. We will proceed in
five steps. For every $\pr\in\mathcal{P}$, let the functions $w_\pr
\dvtx [0,1]\to[0,\infty]$ and $h_\pr\dvtx [0,1]\to[0,\infty)$ be defined by
\begin{eqnarray*}
w_\pr(t) & := & \phi\bigl(F_{\pr}^\leftarrow(t)\bigr)
\eins_{[0,F_{\pr
}(0)]}(t),
\\
h_\pr(t) & := & \int_0^tw_\pr(s)
\,\mathrm{d}s.
\end{eqnarray*}
Of course, the functions $h_\pr$ are continuous, nondecreasing and
satisfy $h_\pr(0)=0$ for all $\pr\in\mathcal{P}$. By assumption
(b), we also have $\sup_{\pr\in\mathcal{P}}h_\pr(1) (\le\sup_{\pr\in\mathcal{P}}\int\phi \,\mathrm{d}\pr_1)<\infty$.\vadjust{\goodbreak}

\emph{Step} 1. As the first step, we will show that the family of
functions $\{h_\pr\}_{\pr\in\mathcal{P}}$ is uniformly
equicontinuous on $[0,1]$. Denote by $J_\pr$ the set of all $y\in
(-\infty,0]$ at which $F_\pr$ has a jump, and by $I_\pr\subset
[0,1]$ the range of $F_\pr$. Noting that $F_\pr(F_\pr^\leftarrow
(s))=s$ if and only if $s\in I_\pr$ and that $F_\pr^\leftarrow(s)\in
J_\pr$ for all $s\in[0,1]\setminus I_\pr$ (cf. \cite{Shorack2000}, page
113), we obtain for any $K>0$\looseness=1\vspace*{-1pt}
\begin{eqnarray*}
&&{\sup_{\pr\in\mathcal{P}} h_\pr(t)}
\\[-1pt]
&&\quad = \sup_{\pr\in\mathcal{P}} \int\phi\bigl(F_{\pr}^\leftarrow
(s)\bigr)\eins_{[0,F_{\pr}(0)\wedge t]\cap I_\pr}(s) \,\mathrm{d}s
\\[-1pt]
&&\qquad{} + \sup_{\pr\in\mathcal{P}} \int\phi\bigl(F_{\pr}^\leftarrow
(s)\bigr)\eins_{[0,F_{\pr}(0)\wedge t]\setminus I_\pr}(s) \,\mathrm{d}s
\\[-1pt]
&&\quad \le \sup_{\pr\in\mathcal{P}} \int\phi\bigl(F_{\pr}^\leftarrow
(s)\bigr)\eins_{[0,F_{\pr}(0)\wedge t]\cap I_\pr}\bigl(F_\pr\bigl(F_\pr
^\leftarrow(s)\bigr)\bigr) \,\mathrm{d}s
\\[-1pt]
&&\qquad{} + \sup_{\pr\in\mathcal{P}} \sum_{y\in J_\pr}
\phi(y) \bigl(\bigl(F_\pr(y)\wedge t\bigr)-\bigl(F_\pr(y-)
\wedge t\bigr) \bigr)
\\[-1pt]
&&\quad \le \sup_{\pr\in\mathcal{P}} \int_{\R_-}\phi(y)
\eins_{\phi
(y)\ge K} \pr_1(\mathrm{d}y) + \sup_{\pr\in\mathcal{P}}
K\int_{\R_-}\eins _{[0,t]}\bigl(F_\pr(y)
\bigr) \pr_1(\mathrm{d}y)
\\[-1pt]
&&\qquad{} + \sup_{\pr\in\mathcal{P}} \int_{\R_-}\phi(y)
\eins_{\phi
(y)\ge K} \pr_1(\mathrm{d}y) + \sup_{\pr\in\mathcal{P}}
K\sum_{y\in J_\pr
} \bigl(\bigl(F_\pr(y)\wedge t
\bigr)-\bigl(F_\pr(y-)\wedge t\bigr) \bigr)
\\[-1pt]
&&\quad \le \sup_{\pr\in\mathcal{P}} \int\phi(y) \eins_{\phi(y)\ge
K}
\pr_1(\mathrm{d}y) + \sup_{\pr\in\mathcal{P}} K \pr_1
\bigl[\bigl\{z\dvtx F_\pr(z)\le t\bigr\}\bigr]
\\[-1pt]
&&\qquad{} + \sup_{\pr\in\mathcal{P}} \int\phi(y) \eins_{\phi(y)\ge K}
\pr_1(\mathrm{d}y) + K t
\\[-1pt]
&&\quad = 2 \sup_{\pr\in\mathcal{P}} \int\phi(y) \eins_{\phi(y)\ge
K}
\pr_1(\mathrm{d}y) + 2K t.
\end{eqnarray*}
Now, by assumption (b) we may choose $K=K_\varepsilon>0$ such that the
first summand is bounded above by $\varepsilon/2$. In particular,
$h_\pr(t)\le\varepsilon$ for all $t\le\varepsilon/(4K_\varepsilon
)$ and $\pr\in\mathcal{P}$. That is, $h_\pr$ is (right) continuous
at $0$ uniformly in $\pr\in\mathcal{P}$. The uniform (right)
continuity at $0$ moreover implies uniform equicontinuity of the family
$\{h_\pr\}_{\pr\in\mathcal{P}}$ on $[0,1]$, because for every $t\in
[0,1]$ and $\Delta\in\R$ with $t+\Delta\in[0,1]$,\vspace*{-1pt}
%and $\pr\in
%\mathcal{P}$,
%
%e17 #&#
\begin{eqnarray}
\label{equicontinuity} \sup_{\pr\in\mathcal{P}}\bigl\llvert h_\pr(t)-h_\pr(t+
\Delta)\bigr\rrvert & = & \sup_{\pr\in\mathcal{P}} \biggl\llvert \int
_{t+(\Delta\wedge
0)}^{t+(\Delta\vee0)}\phi\bigl(F_{\pr}^\leftarrow(s)
\bigr)\eins_{[0,F_{\pr
}(0)]}(s) \,\mathrm{d}s\biggr\rrvert
\nonumber
\\[-1pt]
& \le& \sup_{\pr\in\mathcal{P}}\int_0^{|\Delta|}
\phi\bigl(F_{\pr
}^\leftarrow(s)\bigr)\eins_{[0,F_{\pr}(0)]}(s) \,\mathrm{d}s
\\[-1pt]
& = & \sup_{\pr\in\mathcal{P}} h_{\pr}\bigl(|\Delta|\bigr),
\nonumber
\end{eqnarray}
where we used the fact that $\phi(F_{\pr}^\leftarrow(\cdot))\eins
_{[0,F_{\pr}(0)]}(\cdot)$ is nonincreasing on $[0,1]$.\vadjust{\goodbreak}

\emph{Step} 2. Next, we prepare for Step 3. On the one hand, by the
uniform equicontinuity of the family $\{h_\pr\}_{\pr\in\mathcal
{P}}$ on the compact interval $[0,1]$ (cf. (\ref{equicontinuity}) in
Step 1), we can find for every $\varepsilon>0$ a finite partition
$0={s_0^\varepsilon}<{s_1^\varepsilon}<\cdots<{s_{k_\varepsilon
}^\varepsilon}=1$ (being independent of $\pr$) such that
\[
\sup_{\pr\in\mathcal{P}} \sup_{i=1,\ldots,k_\varepsilon} \int
_{s_{i-1}^\varepsilon}^{s_i^\varepsilon} w_\pr(s) \,\mathrm{d}s \Bigl(= \sup
_{\pr\in\mathcal{P}} \sup_{i=1,\ldots,k_\varepsilon} \bigl(h_\pr
\bigl(s_i^\varepsilon\bigr)-h_\pr\bigl(s_{i-1}^\varepsilon
\bigr)\bigr) \Bigr) \le\varepsilon/2.
\]
On the other hand, by assumption (b) we may choose a constant
$K_\varepsilon>0$ such that $\sup_{\pr\in\mathcal{P}}\int\phi
(z)\eins_{\phi(z)\ge K_\varepsilon}\pr_1(\mathrm{d}z)\le\varepsilon/2$.
Thus, noting
\begin{eqnarray*}
w_\pr^{\rightarrow}(y) & = & \sup\bigl\{s\in[0,1] \dvtx \phi
\bigl(F_\pr^\leftarrow(s)\bigr)\eins_{[0,F_\pr
(0)]}(s)>y\bigr\}
\\
& \le& \sup\bigl\{s\in[0,1] \dvtx s\le F_\pr\bigl(
\phi^\rightarrow(y)\bigr)\wedge F_\pr(0)\bigr\}
\\
& \le& F_\pr\bigl(\phi^\rightarrow(y)\bigr)
\end{eqnarray*}
for $y\in(0,\infty)$, and using integration-by-parts (more precisely
Theorem~1.15 in \cite{Mattila1995}), we obtain
\begin{eqnarray*}
\sup_{\pr\in\mathcal{P}} \int_{K_\varepsilon}^\infty
w_\pr ^\rightarrow(y) \,\mathrm{d}y & \le& \sup_{\pr\in\mathcal{P}} \int
_{K_\varepsilon}^\infty F_\pr\bigl(
\phi^\rightarrow(y)\bigr) \,\mathrm{d}y
\\
& = & \sup_{\pr\in\mathcal{P}} \biggl(\int_{(-\infty,\phi
^\rightarrow(K_\varepsilon)]}\phi(z)
\pr_1(\mathrm{d}z)-K_\varepsilon F_{\pr
}\bigl(
\phi^\rightarrow(K_\varepsilon)\bigr) \biggr)
\\
& \le& \sup_{\pr\in\mathcal{P}} \int_{(-\infty,\phi^\rightarrow
(K_\varepsilon)]}\phi(z)
\pr_1(\mathrm{d}z)
\\
& \le& \sup_{\pr\in\mathcal{P}} \int_{(-\infty,0]}\phi(z) \eins
_{\phi(z)\ge K_\varepsilon} \pr_1(\mathrm{d}z)
\\
& \le& \varepsilon/2,
\end{eqnarray*}
where $\phi^\rightarrow(z):=\sup\{y\in(-\infty,0]\dvtx \phi(y)>z\}$,
$z\in[0,\infty)$, denotes the right continuous inverse of the
nonincreasing function $\phi\dvtx (-\infty,0]\to[1,\infty)$. Taking into
account that the functions $w_\pr^\rightarrow$ take values only in
$[0,1]$, we can find a finite partition $0={y_0^\varepsilon
}<{y_1^\varepsilon}<\cdots<{y_{l_\varepsilon-1}^\varepsilon
}<{y_{l_\varepsilon}^\varepsilon}=\infty$ (being independent of $\pr
$), with ${y_{l_\varepsilon-1}^\varepsilon}=K_\varepsilon$, such that
\[
\sup_{\pr\in\mathcal{P}} \sup_{i=1,\ldots,l_\varepsilon} \int
_{y_{i-1}^\varepsilon}^{y_{i}^\varepsilon} w_\pr^\rightarrow(y) \,\mathrm{d}y
\le\varepsilon/2,
\]
that is, in other words,
\[
\sup_{\pr\in\mathcal{P}} \sup_{i=1,\ldots,l_\varepsilon} \int
_0^1 \bigl(\bigl(y_{i}^\varepsilon
\wedge w_\pr(s)\bigr)-\bigl(y_{i-1}^\varepsilon \wedge
w_\pr(s)\bigr) \bigr)\,\mathrm{d}s \le\varepsilon/2.
\]
Finally, for every $\pr\in\mathcal{P}$, we let $0=t_{0,\pr
}^\varepsilon<t_{1,\pr}^\varepsilon<\cdots<t_{m_{\varepsilon,\pr
},\pr}^\varepsilon=1$ be the partition consisting of all points
$s_i^\varepsilon$ and $w_\pr^\rightarrow(y_i^\varepsilon)$, where
$m_{\varepsilon,\pr}\le k_\varepsilon+l_\varepsilon$, and
$k_\varepsilon+l_\varepsilon=:m_\varepsilon$ is independent of $\pr
$. For notational simplicity, we assume without loss of generality
$m_{\varepsilon,\pr}=m_\varepsilon$ for all $\pr\in\mathcal{P}$.

\emph{Step} 3. Let $L^1(\mathrm{d}\I)$ be the space of all Lebesgue integrable
functions on $[0,1]$, and $[l,u]:=\{f\in L^1(\mathrm{d}\I)\dvtx l\le f\le u\}$ be
the bracket of two functions $l,u\in L^1(\mathrm{d}\I)$ with $l\le u$
pointwise. For any $\varepsilon>0$, a bracket $[l,u]$ is called
$\varepsilon$-bracket if $\int_0^1(u-l) \,\mathrm{d}\I<\varepsilon$; cf. \cite
{vanderVaartWellner1996}, page 83. Using the notation introduced in Step
2, we set for every $\pr\in\mathcal{P}$ and $i=1,\ldots
,m_\varepsilon$
\begin{eqnarray*}
l_{i,\pr}^\varepsilon(\cdot) & := & w_\pr
\bigl(t_{i,\pr}^\varepsilon \bigr)\eins_{[0,t_{i-1,\pr}^\varepsilon]}(\cdot),
\\
u_{i,\pr}^\varepsilon(\cdot) & := & w_\pr
\bigl(t_{i-1,\pr}^\varepsilon \bigr)\eins_{[0,t_{i-1,\pr}^\varepsilon]}(\cdot) +
w_\pr(\cdot) \eins _{(t_{i-1,\pr}^\varepsilon,t_{i,\pr}^\varepsilon]}(\cdot).
\end{eqnarray*}
It follows from the choice of the $t_{i,\pr}^\varepsilon$ that, for
every $\pr\in\mathcal{P}$, $[l_{1,\pr}^\varepsilon,u_{1,\pr
}^\varepsilon],\ldots,[l_{m_\varepsilon,\pr}^\varepsilon
,u_{m_\varepsilon,\pr}^\varepsilon]$ provide $\varepsilon$-brackets
in $L^1(\mathrm{d}\I)$ covering the class $\mathcal{E}_\pr:=\{w_{s,\pr} \dvtx
s\in[0,1]\}$ of functions
\[
w_{s,\pr}(\cdot) := w_\pr(s)\eins_{[0,s]}(\cdot).
\]

\emph{Step} 4. By the usual quantile transformation \cite
{ShorackWellner1986}, page 103, we can find for every $\pr\in\mathcal
{P}$ a sequence of $U[0,1]$-random variables $U_{1,\pr},U_{2,\pr
},\ldots$ (possibly on an extension $(\Omega_\pr,\mathcal{F}_\pr
,\overline{\pr})$ of the original probability space $(\Omega
,\mathcal{F},\pr)$) such that the sequence $(U_{i,\pr})$ has the
same mixing coefficients (under $\overline{\pr}$) as the sequence
$(X_i)$ under $\pr$ and such that the corresponding empirical
distribution function $\widehat G_{n,\pr}$ satisfies $\widehat
F_n=\widehat G_{n,\pr}\circ F_{\pr}$ $\overline{\pr}$-almost
surely. Here we will show as in the proof of Theorem~2.4.1 in \cite
{vanderVaartWellner1996} that $\overline{\pr}$-almost surely
%
%e18 #&#
\begin{eqnarray}
\label
{appendix a2 - eq 1} &&{\sup_{y\le0} \bigl\llvert \widehat
F_n(y)-F_{\pr}(y) \bigr\rrvert \phi (y)}
\nonumber
\\[-8pt]
\\[-8pt]
&&\quad \le \max_{i=1,\ldots,m_\varepsilon}\max \biggl\{\int_0^1
u_{i,\pr}^\varepsilon \,\mathrm{d}(\widehat G_{n,\pr}-\I) ;
\int_0^1 l_{i,\pr
}^\varepsilon \,
\mathrm{d}(\I-\widehat G_{n,\pr}) \biggr\} + \varepsilon
\nonumber
\end{eqnarray}
for every $\varepsilon>0$. Since $F_\pr^\leftarrow(F_\pr(y))\le y$
for all $y\in\R$ (cf. \cite{Shorack2000}, page 113) and $\phi$ is
nonincreasing on $(-\infty,0]$, we have $\overline{\pr}$-almost surely
\begin{eqnarray*}
\sup_{x\le0} \bigl\llvert \widehat F_n(y)-F_{\pr}(y)
\bigr\rrvert \phi(y) & = & \sup_{y\le0} \bigl\llvert \widehat
G_{n,\pr}\bigl(F_\pr(y)\bigr)-F_\pr(y) \bigr
\rrvert \phi(y)
\\
& \le& \sup_{y\le0} \bigl\llvert \widehat G_{n,\pr}
\bigl(F_\pr(y)\bigr)-F_\pr (y) \bigr\rrvert \phi
\bigl(F_\pr^\leftarrow\bigl(F_\pr(y)\bigr)\bigr)
\\
& \le& \sup_{s\in(0,1)}\bigl\llvert \widehat G_{n,\pr}(s)-s
\bigr\rrvert w_\pr(s)
\\
& = & \sup_{s\in(0,1)} \biggl\llvert \int_0^1
w_{s,\pr} \,\mathrm{d}\widehat G_{n,\pr
}-\int_0^1w_{s,\pr}
\,\mathrm{d}\I \biggr\rrvert .
\end{eqnarray*}
So for (\ref{appendix a2 - eq 1}) it suffices to show that
%
%e19 #&#
\begin{eqnarray}
\label{appendix a2 - eq 1 - alt} &&{\sup_{s\in(0,1)} \biggl\llvert \int
_0^1 w_{s,\pr} \,\mathrm{d}\widehat
G_{n,\pr}-\int_0^1w_{s,\pr} \,\mathrm{d}
\I \biggr\rrvert }
\nonumber
\\[-8pt]
\\[-8pt]
&&\quad \le \max_{i=1,\ldots,m_\varepsilon} \max \biggl\{\int_0^1
u_{i,\pr}^\varepsilon \,\mathrm{d}(\widehat G_{n,\pr}-\I) ;
\int_0^1 l_{i,\pr
}^\varepsilon \,
\mathrm{d}(\I-\widehat G_{n,\pr}) \biggr\} + \varepsilon.
\nonumber
\end{eqnarray}
To prove (\ref{appendix a2 - eq 1 - alt}), we note that for every
$s\in[0,1]$ there is some $i_s=i_s(\pr)\in\{1,\ldots,m_\varepsilon
\}$ such that $w_{s,\pr}\in[l_{i_s,\pr}^\varepsilon,u_{i_s,\pr
}^\varepsilon]$; cf. Step 3. Therefore, since $[l_{i_s;\pr
}^\varepsilon,u_{i_s,\pr}^\varepsilon]$ is an $\varepsilon$-bracket,
\begin{eqnarray*}
\int_0^1 w_{s,\pr} \,\mathrm{d}
\widehat G_{n,\pr}-\int_0^1
w_{s,\pr} \,\mathrm{d}\I & \le& \int_0^1
u_{i_s,\pr}^\varepsilon \,\mathrm{d}\widehat G_{n,\pr}-\int
_0^1 w_{s,\pr} \,\mathrm{d}\I
\\
& = & \int_0^1 u_{i_s,\pr}^\varepsilon
\,\mathrm{{d}}(\widehat G_{n,\pr}-\I )+\int_0^1
\bigl(u_{i_s,\pr}^\varepsilon-w_{s,\pr}\bigr) \,\mathrm{d}\I
\\
& \le& \int_0^1 u_{i_s,\pr}^\varepsilon
\,\mathrm{d}(\widehat G_{n,\pr}-\I )+\int_0^1
\bigl(u_{i_s,\pr}^\varepsilon-l_{i_s,\pr}^\varepsilon\bigr) \,
\mathrm{d}\I
\\
& \le& \max_{i=1,\ldots,m_\varepsilon}\int_0^1
u_{i,\pr
}^\varepsilon \,\mathrm{d}(\widehat G_{n,\pr}-\I) +
\varepsilon.
\end{eqnarray*}
Analogously, we obtain
\[
\int_0^1 w_{s,\pr} \,\mathrm{d}\widehat
G_{n,\pr}-\int_0^1 w_{s,\pr} \,\mathrm{d}
\I \ge - \biggl(\max_{i=1,\ldots,m_\varepsilon}\int_0^1
l_{i,\pr
}^\varepsilon \,\mathrm{d}(\I-\widehat G_{n,\pr}) +
\varepsilon \biggr).
\]
That is, (\ref{appendix a2 - eq 1 - alt}) and therefore (\ref
{appendix a2 - eq 1}) hold true.

\emph{Step} 5. Because of (\ref{appendix a2 - eq 1}), for (\ref
{appendix a2 - eq 0}) to be true it suffices to show that for every
$\delta>0$
%
%e20 #&#
\begin{equation}
\label{appendix a2 - eq 100} \lim_{n\to\infty} \sup_{\pr\in\mathcal{P}}
\overline{\pr} \biggl[\max_{i=1,\ldots,m_\varepsilon}\max \biggl\{\int
_0^1 u_{i,\pr
}^\varepsilon \,\mathrm{d}(
\widehat G_{n,\pr}-\I) ; \int_0^1
l_{i,\pr
}^\varepsilon \,\mathrm{d}(\I-\widehat G_{n,\pr}) \biggr\} \ge
\delta/2 \biggr] = 0
\end{equation}
with $\varepsilon=\varepsilon(\delta):=\delta/2$. For (\ref
{appendix a2 - eq 100}) to be true it suffices to show that
%
%e21 #&#
%e22 #&#
\begin{eqnarray}
\label{glivenko cantelli - eq - proof 1} \lim_{n\to\infty} \sup_{\pr\in\mathcal{P}}
\overline{\pr} \biggl[ \biggl\llvert \int_0^1
l_{i,\pr}^\varepsilon \,\mathrm{d}(\I-\widehat G_{n,\pr})
\biggr\rrvert \ge\delta/2 \biggr] & = & 0,
\\
\label{glivenko cantelli - eq - proof 2} \lim_{n\to\infty} \sup_{\pr\in\mathcal{P}}
\overline{\pr} \biggl[ \biggl\llvert \int_0^1
u_{i,\pr}^\varepsilon \,\mathrm{d}(\widehat G_{n,\pr}-\I)
\biggr\rrvert \ge\delta/2 \biggr] & = & 0,
\end{eqnarray}
for every $i=1,\ldots,m_\varepsilon$, because $\overline{\pr}$ is
subadditive. We will show only (\ref{glivenko cantelli - eq - proof
2}). Assertion (\ref{glivenko cantelli - eq - proof 1}) can be shown
analogously. Since
\begin{eqnarray*}
&&{\int_0^1 u_{i,\pr}^\varepsilon
\,\mathrm{d}(\widehat G_{n,\pr}-\I)}
\\
%\label{glivenko cantelli - eq - proof 3}\\
&&\quad = \frac{1}{n}\sum_{j=1}^n
\bigl(w_\pr\bigl(t_{i-1,\pr}^\varepsilon \bigr)
\eins_{[0,t_{i-1,\pr}^\varepsilon]}(U_{j,\pr})-\ex_{\overline
{\pr}} \bigl[w_\pr
\bigl(t_{i-1,\pr}^\varepsilon\bigr) \eins_{[0,t_{i-1,\pr
}^\varepsilon]}(U_{1,\pr})
\bigr] \bigr)
\\
&&\qquad {} + \frac{1}{n}\sum_{j=1}^n
\bigl(w_\pr(U_{j,\pr})\eins _{(t_{i-1,\pr}^\varepsilon,t_{i,\pr}^\varepsilon]}(U_{j,\pr})-
\ex _{\overline{\pr}} \bigl[w_\pr(U_{1,\pr})
\eins_{(t_{i-1,\pr
}^\varepsilon,t_{i,\pr}^\varepsilon]}(U_{1,\pr}) \bigr] \bigr),
\end{eqnarray*}
for (\ref{glivenko cantelli - eq - proof 2}) to be true it suffices to
show that for every fixed $i\in\{1,\ldots,m_\varepsilon\}$
%for
%every $i=1,\ldots,m_\varepsilon$ and $\delta>0$
%
%e23 #&#
%e24 #&#
\begin{eqnarray}
\label{glivenko cantelli - eq - proof 2 - 1} &&
\hspace*{2pt}\lim_{n\to\infty} \sup_{\pr\in\mathcal{P}}
\overline{\pr} \Biggl[ \Biggl|\frac{1}{n}\sum_{j=1}^n
\bigl(w_\pr\bigl(t_{i-1,\pr}^\varepsilon \bigr)
\eins_{[0,t_{i-1,\pr}^\varepsilon]}(U_{j,\pr})
\nonumber
\\[-8pt]
\\[-8pt]
&&\hspace*{55.4pt}{}-\ex_{\overline{\pr}} \bigl[w_\pr\bigl(t_{i-1,\pr}^\varepsilon
\bigr) \eins _{[0,t_{i-1,\pr}^\varepsilon]}(U_{1,\pr}) \bigr] \bigr) \Biggr| \ge \delta/4
\Biggr] = 0,
\nonumber
\\
\label{glivenko cantelli - eq - proof 2 - 2} &&\lim_{n\to\infty} \sup_{\pr\in\mathcal{P}}
\overline{\pr} \Biggl[ \Biggl|\frac{1}{n}\sum_{j=1}^n
\bigl(w_\pr(U_{j,\pr})\eins _{(t_{i-1,\pr}^\varepsilon,t_{i,\pr}^\varepsilon]}(U_{j,\pr})
\nonumber
\\[-8pt]
\\[-8pt]
&&\hspace*{54pt}{}-\ex_{\overline{\pr}} \bigl[w_\pr(U_{1,\pr})
\eins_{(t_{i-1,\pr
}^\varepsilon,t_{i,\pr}^\varepsilon]}(U_{1,\pr}) \bigr] \bigr) \Biggr| \ge\delta/4 \Biggr] = 0.
\nonumber
\end{eqnarray}
We will show only (\ref{glivenko cantelli - eq - proof 2 - 2}).
Assertion (\ref{glivenko cantelli - eq - proof 2 - 1}) can be shown
similarly, noting that the inequality $w_\pr(t_{i-1,\pr}^\varepsilon
)\eins_{[0,t_{i-1,\pr}^\varepsilon]}(U_{1,\pr})\le w_\pr(U_{1,\pr
})\eins_{[0,t_{i-1,\pr}^\varepsilon]}(U_{1,\pr})$ holds since
$w_\pr$ is nonincreasing.

Corollary~\ref{probability bound} ensures (\ref{glivenko cantelli -
eq - proof 2 - 2}) if we can show that the conditions (\ref
{probability bound - cond - 1}) and (\ref{probability bound - cond -
2}) in the corollary hold for $\Pi:=\mathcal{P}$, $(\Omega_\pi
,\mathcal{F}_\pi,\pr_\pi):=(\Omega_\pr,\mathcal{F}_\pr
,\overline{\pr})$ and $\xi_{j,\pi}^{(i)}:=w_\pr(U_{j,\pr})\eins
_{(t_{i-1,\pr}^\varepsilon,t_{i,\pr}^\varepsilon]}(U_{j,\pr})$,
$j\in\N$, for every fixed $i\in\{1,\ldots,m_\varepsilon\}$.
Condition (\ref{probability bound - cond - 1}) follows from
\begin{eqnarray*}
\limsup_{K\to\infty} \sup_{\pr\in\mathcal{P}} \ex_{\overline
{\pr}}
\bigl[\bigl\llvert \xi_{1,\pr}^{(i)}\bigr\rrvert
\eins_{|\xi_{1,\pr}^{(i)}|\ge
K} \bigr] & \le& \limsup_{K\to\infty} \sup
_{\pr\in\mathcal{P}} \ex _{\overline{\pr}} \bigl[\bigl\llvert
w_\pr(U_{1,\pr})\bigr\rrvert \eins_{|w_\pr(U_{1,\pr
})|\ge K} \bigr]
\\
& \le& \limsup_{K\to\infty} \sup_{\pr\in\mathcal{P}} \int
_0^1\bigl |\phi\bigl(F_\pr^\leftarrow(t)
\bigr)\bigr | \eins_{|\phi(F_\pr^\leftarrow
(t))|\ge K} \,\mathrm{d}t
\\
& = & \limsup_{K\to\infty} \sup_{\pr\in\mathcal{P}} \int\phi (y)
\eins_{\phi(y)\ge K} \pr_1(\mathrm{d}y)
\end{eqnarray*}
and assumption (b). Condition (\ref{probability bound - cond - 2}) is
an immediate consequence of assumption (c) and the fact that the
sequence $(U_{j,\pr})$ has the same mixing coefficients under
$\overline{\pr}$ as the sequence $(X_i)$ under $\pr$. This completes
the proof of Theorem~\ref{glivenko cantelli}.

%%%%%%%%%%%%%%%%%%%%%%%%%%%%%%%%%%%%%%%%%%%%%%%%%%%%%%%%%%%%%%%%

%s6 #&#
\section{Proof of Theorem \texorpdfstring{\protect\ref{glivenko cantelli - weakl
topology}}{3.9}}\label{proof of glivenko cantelli - weakl topology}

Let $\delta>0$ be arbitrary but fixed. Choose $k_\delta\in\N$ such
that $\sum_{k=k_\delta+1}^\infty2^{-k}<\delta/3$, and notice that
(\ref{glivenko cantelli - weakl topology - eq}) holds if we can show
that the following two conditions hold
%
%e25 #&#
%e26 #&#
\begin{eqnarray}
\label{glivenko cantelli - weakl topology - eq - proof 1} \lim_{n\to\infty} \sup_{\pr\in\mathcal{P}} \pr
\biggl[ \biggl\llvert \int f_k \,\mathrm{d}\widehat m_{n}-\int
f_k \,\mathrm{d}\pr_1 \biggr\rrvert \ge\frac{\delta}{3k_\delta
} \biggr] & =
& 0,\qquad k=1,\ldots,k_\delta,
\\
\label{glivenko cantelli - weakl topology - eq - proof 2} \lim_{n\to\infty} \sup_{\pr\in\mathcal{P}} \pr
\biggl[ \biggl\llvert \int \psi \,\mathrm{d}\widehat m_{n}-\int\psi \,\mathrm{d}
\pr_1 \biggr\rrvert \ge\frac{\delta}{3} \biggr] & = & 0.
\end{eqnarray}

To prove (\ref{glivenko cantelli - weakl topology - eq - proof 2}), we
note that the left-hand side in (\ref{glivenko cantelli - weakl
topology - eq - proof 2}) can be written as\vspace*{-1pt}
\[
\lim_{n\to\infty} \sup_{\pr\in\mathcal{P}} \pr \Biggl[ \Biggl
\llvert \frac{1}{n}\sum_{j=i}^n
\psi(X_i)-\ex_{\pr}\bigl[\psi(X_1)\bigr] \Biggr
\rrvert \ge \frac{\delta}{3} \Biggr].
\]
The latter is zero by Corollary~\ref{probability bound} with $\Pi
:=\mathcal{P}$, $(\Omega_\pi,\mathcal{F}_\pi,\pr_\pi):=(\Omega
,\mathcal{F},\pr)$ and the random variables $\xi_{i,\pi}:=\psi
(X_i)$, $i\in\N$. Indeed: assumption (\ref{probability bound - cond
- 1}) of the corollary holds by our assumption (b) and the identity
$\ex_{\pi} [|\xi_{1,\pi}|\eins_{|\xi_{1,\pi}|\ge K}
]=\int\psi\eins_{\psi\ge K} \,\mathrm{d}\pr_1$. Assumption (\ref{probability
bound - cond - 2}) of the corollary holds by assumption (c) and the
fact that, under every $\pr\in\mathcal{P}$, the sequence $(\psi
(X_i))$ has the same mixing coefficients as the sequence $(X_i)$.

Assertion (\ref{glivenko cantelli - weakl topology - eq - proof 1})
can be proven in the same line, noting that $\mathcal{P}$ also
satisfies condition (b) with $\psi$ replaced by $f_k$ for any
$k=1,\ldots,k_\delta$; recall that each $f_k$ has compact support.
This completes the proof of Theorem~\ref{glivenko cantelli - weakl topology}.\vspace*{-1pt}

%%%%%%%%%%%%%%%%%%%%%%%%%%%%%%%%%%%%%%%%%%%%%%%%%%%%%%%%%%%%%%%%

\begin{appendix}

%s7 #&#
\section{Uniform weak LLN for strongly mixing random variables}\label
{appendix rio}

\setcounter{equation}{26}
\renewcommand{\theequation}{\arabic{equation}}

Let $(\xi_i)=(\xi_i)_{i\in\N}$ be a sequence of random variables on
some probability space $(\Omega,\mathcal{F},\pr)$. According to
Rosenblatt \cite{Rosenblatt1956}, the sequence $(\xi_i)$ is said to
be \emph{strongly mixing} (or \emph{$\alpha$-mixing}) if the $n$th
mixing coefficient $\alpha_\pr(n):=\sup_{k\ge1}\sup_{A,B} |\pr
[A\cap B]-\pr[A]\pr[B]|$ converges to zero as $n\to\infty$, where
the second supremum ranges over all $A\in\sigma(\xi_1,\ldots,\xi
_k)$ and $B\in\sigma(\xi_{k+n},\xi_{k+n+1},\ldots)$. Recall from
\cite{Bradley2005}, Inequality (1.9), that $\alpha_\pr(n)\le1/4$ for
all $n\in\N$, and use the convention $\alpha_\pr(0):=1/4$. For an
overview on mixing conditions, see \cite{Bradley2005,Doukhan1994}. The
following result is a consequence of Theorem~4 and Lemma~1 in \cite
{Rio1995}; cf. (5.1) on page 936 in \cite{Rio1995}.\vspace*{-1pt}

%th7.1 #&#
\begin{theorem}[(Rio)]\label{Rios Markov inequality}
Let $\xi_1,\xi_2,\ldots$ be identically distributed with $\ex[\xi
_1^2]<\infty$, and $\alpha_\pr(n)$ be defined as above.
%by (
%\ref{def alpha coefficients}).
Let $G$ be the distribution function of $|\xi_1|$, and set $\overline
G:=1-G$. Then, for every $x>0$ and $n\in\N$,\vspace*{-1pt}
\[
\pr \Biggl[ \sup_{k=1,\ldots,n} \Biggl\llvert \sum
_{i=1}^k\bigl(\xi_i-\ex[\xi
_1]\bigr) \Biggr\rrvert \ge2x \Biggr] \le \frac{16}{x^{2}} n \sum
_{j=0}^{n-1}\int_0^{2\alpha_\pr
(j)}
\overline{G}^{ \rightarrow}(t)^2 \,\mathrm{d}t.
\]
\end{theorem}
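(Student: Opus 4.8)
The plan is to deduce the inequality from Rio's maximal inequality by a routine Markov-type reduction, after identifying the integrand on the right-hand side. First I would observe that, by the definition of the rightcontinuous inverse, $\overline{G}^{\,\rightarrow}$ is precisely the quantile function of the nonnegative random variable $|\xi_1|$; consequently $\int_0^1\overline{G}^{\,\rightarrow}(t)^2\,dt=\ex[\xi_1^2]<\infty$, so that each integral $\int_0^{2\alpha_\pr(j)}\overline{G}^{\,\rightarrow}(t)^2\,dt$ is finite and the right-hand side is well defined. Writing $S_k:=\sum_{i=1}^k(\xi_i-\ex[\xi_1])$, I would then pass from the tail of the maximum to a second moment via Markov's inequality: since $\{\sup_{k\le n}|S_k|\ge 2x\}=\{\max_{k\le n}S_k^2\ge 4x^2\}$,
$$
    \pr\Big[\,\sup_{k=1,\ldots,n}|S_k|\ge 2x\,\Big]\,\le\,\frac{1}{4x^2}\,\ex\Big[\max_{k=1,\ldots,n}S_k^2\Big].
$$

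The heart of the matter is then to estimate $\ex[\max_{k\le n}S_k^2]$. For this I would invoke Rio's maximal inequality (Theorem 4 in \cite{Rio1995}) for the centered, identically distributed, strongly mixing sequence $(\xi_i-\ex[\xi_1])$, whose proof controls the squared maximum through the lagged covariances $\covi(\xi_i,\xi_{i+j})$. The quantile integrals in the statement enter through Rio's covariance inequality (Lemma 1 in \cite{Rio1995}): because $\xi_i$ and $\xi_{i+j}$ share the common quantile function $\overline{G}^{\,\rightarrow}$ and their generating $\sigma$-fields are separated by the mixing coefficient $\alpha_\pr(j)$, one has $|\covi(\xi_i,\xi_{i+j})|\le 2\int_0^{2\alpha_\pr(j)}\overline{G}^{\,\rightarrow}(t)^2\,dt$, where the diagonal case $j=0$ is covered by the convention $\alpha_\pr(0)=1/4$. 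Summing over $i=1,\ldots,n$ and over lags $j=0,\ldots,n-1$ while keeping careful track of the universal numerical constants — exactly the computation recorded in (5.1) on p.\,936 of \cite{Rio1995} — bounds $\ex[\max_{k\le n}S_k^2]$ by a constant multiple of $n\sum_{j=0}^{n-1}\int_0^{2\alpha_\pr(j)}\overline{G}^{\,\rightarrow}(t)^2\,dt$, and combining this with the Markov step above produces the asserted factor $16/x^2$.

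I expect the only genuine difficulty to lie in the passage from $\ex[S_n^2]$ to the maximal quantity $\ex[\max_{k\le n}S_k^2]$: a naive summation of the covariance bounds controls only $\ex[S_n^2]$ and discards the supremum over $k$, so one really needs the full maximal inequality of \cite{Rio1995} (resting on a blocking and martingale-approximation argument) rather than a mere variance estimate. Everything else is bookkeeping, namely matching Rio's quantile function with $\overline{G}^{\,\rightarrow}$, respecting the factor $2\alpha_\pr(j)$ in the integration limits, and propagating the constants so that they collapse to $16/x^2$; since both the maximal inequality and the covariance inequality are quoted from \cite{Rio1995}, no new probabilistic input is required beyond these two results.
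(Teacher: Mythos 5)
Your proposal is correct and takes essentially the same route as the paper, which gives no independent proof but states the result precisely as a consequence of Theorem 4 (the maximal inequality) and Lemma 1 (the covariance--quantile inequality) of \cite{Rio1995}, cf.\ display (5.1) on p.\,936 there. Your Markov reduction, the use of centering-invariance of covariances so that the quantile function $\overline{G}^{\,\rightarrow}$ of $|\xi_1|$ applies, and the handling of the diagonal term via $\alpha_\pr(0)=1/4$ are exactly the bookkeeping that turns those two cited results into the stated bound.
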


From Theorem~\ref{Rios Markov inequality}, we can even derive the
following uniform weak LLN for strongly mixing random variables. In the
special case of independent random variables the corollary is already
known from \cite{Chung1951}. In fact, \cite{Chung1951} provides even
a uniform strong LLN.\vspace*{-1pt}

%co7.2 #&#
\begin{corollary}\label{probability bound}
Let $\Pi\neq\emptyset$ be an arbitrary index set. Further, for
every $\pi\in\Pi$, let $(\Omega_\pi,\mathcal{F}_\pi,{\pr}_\pi
)$ be a probability space and $\xi_{1,\pi},\xi_{2,\pi},\ldots$ be
a sequence of random variables on $(\Omega_\pi,\mathcal{F}_\pi,\pr
_\pi)$ being identically distributed and strongly mixing with mixing
coefficients $(\alpha_{\pi}(n)):=(\alpha_{\pr_\pi}(n))$. Further
suppose that the following two conditions hold\vspace*{-1pt}
%
%e27 #&#
\begin{eqnarray}
\label{probability bound - cond - 1} \lim_{K\to\infty} \sup_{\pi\in\Pi}
\ex_\pi \bigl[|\xi_{1,\pi
}|\eins_{|\xi_{1,\pi}|\ge K} \bigr] &=& 0,
\\[-1pt]
\label{probability bound - cond - 2} \lim_{n\to\infty} \sup_{\pi\in\Pi}
\alpha_{\pi}(n) &=& 0. %
%\frac{1}{n}\sum_{j=0}^{n-1}\alpha_{\pi}(j) = 0.
\end{eqnarray}
Then, for every $\delta>0$,
%
%e29 #&#
\begin{equation}
\label{probability bound - eq} \lim_{n\to\infty} \sup_{\pi\in\Pi}
\pr_\pi \Biggl[ \Biggl\llvert \frac
{1}{n}\sum
_{i=1}^n \xi_{i,\pi}-\ex_\pi[
\xi_{1,\pi}]\Biggr\rrvert \ge\delta \Biggr] = 0.
\end{equation}
\end{corollary}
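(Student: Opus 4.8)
The plan is to reduce everything to Rio's maximal inequality (Theorem \ref{Rios Markov inequality}) by a truncation argument: Rio's inequality requires a finite second moment, which is only available after truncating the variables, while the discarded tail is controlled by the uniform integrability hypothesis (\ref{probability bound - cond - 1}) alone. Note first that (\ref{probability bound - cond - 1}) entails $\sup_{\pi\in\Pi}\ex_\pi[|\xi_{1,\pi}|]<\infty$, so all expectations below are finite. Fix $\delta>0$ and $\varepsilon>0$. For a level $K>0$ I would split each variable coordinatewise as $\xi_{i,\pi}=\xi_{i,\pi}'+\xi_{i,\pi}''$ with $\xi_{i,\pi}':=\xi_{i,\pi}\eins_{|\xi_{i,\pi}|\le K}$ and $\xi_{i,\pi}'':=\xi_{i,\pi}\eins_{|\xi_{i,\pi}|>K}$. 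Since truncation is a fixed measurable map applied to each coordinate, the sequences $(\xi_{i,\pi}')_i$ and $(\xi_{i,\pi}'')_i$ remain identically distributed, and their strong mixing coefficients are dominated by $\alpha_\pi(n)$. By linearity the centered average decomposes as
$$
    \frac1n\sum_{i=1}^n\xi_{i,\pi}-\ex_\pi[\xi_{1,\pi}]\,=\,A_{n,\pi}+B_{n,\pi},
$$
with $A_{n,\pi}:=\frac1n\sum_{i=1}^n\xi_{i,\pi}'-\ex_\pi[\xi_{1,\pi}']$ and $B_{n,\pi}:=\frac1n\sum_{i=1}^n\xi_{i,\pi}''-\ex_\pi[\xi_{1,\pi}'']$, so that $\pr_\pi[|A_{n,\pi}+B_{n,\pi}|\ge\delta]\le\pr_\pi[|A_{n,\pi}|\ge\delta/2]+\pr_\pi[|B_{n,\pi}|\ge\delta/2]$.

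For the tail term $B_{n,\pi}$ I would argue crudely in $L^1$. As the $\xi_{i,\pi}''$ are identically distributed, $\ex_\pi[|B_{n,\pi}|]\le 2\,\ex_\pi[|\xi_{1,\pi}|\eins_{|\xi_{1,\pi}|>K}]$, and Markov's inequality gives
$$
    \sup_{\pi\in\Pi}\pr_\pi[|B_{n,\pi}|\ge\delta/2]\,\le\,\frac4\delta\,\sup_{\pi\in\Pi}\ex_\pi[|\xi_{1,\pi}|\eins_{|\xi_{1,\pi}|>K}]
$$
uniformly in $n$. By (\ref{probability bound - cond - 1}) I can now fix $K$ so large that this bound does not exceed $\varepsilon/2$.

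With $K$ frozen, the bounded part $A_{n,\pi}$ is treated by Rio's inequality. The variables $\xi_{i,\pi}'$ are bounded by $K$, hence square integrable, so Theorem \ref{Rios Markov inequality} applies to $(\xi_{i,\pi}')_i$; writing $\overline{G}$ for one minus the distribution function of $|\xi_{1,\pi}'|$, the bound $|\xi_{1,\pi}'|\le K$ forces $\overline{G}^{\,\rightarrow}\le K$ pointwise, whence $\int_0^{2\alpha_\pi(j)}\overline{G}^{\,\rightarrow}(t)^2\,dt\le 2K^2\alpha_\pi(j)$. Applying the inequality to $\sup_{k\le n}|\sum_{i=1}^k(\xi_{i,\pi}'-\ex_\pi[\xi_{1,\pi}'])|$ with $2x=n\delta/2$ and taking suprema yields
$$
    \sup_{\pi\in\Pi}\pr_\pi[|A_{n,\pi}|\ge\delta/2]\,\le\,\frac{512\,K^2}{\delta^2}\,\frac1n\sum_{j=0}^{n-1}\sup_{\pi\in\Pi}\alpha_\pi(j).
$$
By (\ref{probability bound - cond - 2}) the sequence $a_j:=\sup_{\pi\in\Pi}\alpha_\pi(j)$ tends to $0$, so its Ces\`aro mean $\frac1n\sum_{j=0}^{n-1}a_j$ tends to $0$ as well; since $K$ is fixed, the right-hand side falls below $\varepsilon/2$ for all $n\ge n_0$ with a suitable $n_0$. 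Adding the two estimates gives $\sup_{\pi\in\Pi}\pr_\pi[|\frac1n\sum_{i=1}^n\xi_{i,\pi}-\ex_\pi[\xi_{1,\pi}]|\ge\delta]\le\varepsilon$ for $n\ge n_0$, which is (\ref{probability bound - eq}).

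The crux, and the reason truncation cannot be avoided, is the gap between the hypotheses and the available tool: Rio's inequality is a second-moment statement while only first-moment uniform integrability is assumed. The two points requiring care are (i) that truncation does not enlarge the mixing coefficients, which lets me invoke Rio's bound with $\alpha_\pi(j)$ in place of the smaller truncated coefficients by exploiting nonnegativity of the integrand to enlarge the range of integration, and (ii) the uniformity in $\pi$, which I secure by passing to $a_j=\sup_{\pi\in\Pi}\alpha_\pi(j)$ \emph{before} forming the Ces\`aro average, so that a single threshold $n_0$ serves all $\pi$ simultaneously.
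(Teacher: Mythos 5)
Your proof is correct and follows essentially the same route as the paper's: a truncation at level $K$ chosen via the uniform integrability condition (\ref{probability bound - cond - 1}), Markov's inequality for the tail part, Rio's maximal inequality (Theorem \ref{Rios Markov inequality}) with the bound $\overline{G}^{\,\rightarrow}\le K$ for the truncated part, and the Ces\`aro/Toeplitz argument with $\sup_{\pi\in\Pi}\alpha_\pi(j)$ to get a threshold $n_0$ uniform in $\pi$. The only cosmetic difference is that you use a two-term decomposition with a centered tail term (costing a factor $2$ in the $L^1$ bound), whereas the paper splits into three terms with an uncentered tail average and a separate deterministic term; both yield the same conclusion with slightly different constants.
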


\begin{pf}
We will use a truncation argument. Let $K>0$ be a constant (to be
specified later on) and $\xi_{i,\pi}^K:=\xi_{i,\pi}\eins_{|\xi
_{i,\pi}|\le K}$ be the $K$-truncation of $\xi_{i,\pi}$. Using the
decomposition $\xi_{i,\pi}=\xi_{i,\pi}^K+\xi_{i,\pi}\eins_{|\xi
_{i,\pi}|>K}$ and the triangular inequality, we obtain
\begin{eqnarray*}
\pr_\pi \Biggl[ \Biggl\llvert \frac{1}{n}\sum
_{i=1}^n \xi_{i,\pi}-\ex_\pi [
\xi_{1,\pi}] \Biggr\rrvert \ge\delta \Biggr] & \le& \pr_\pi
\Biggl[ \Biggl\llvert \frac{1}{n}\sum_{i=1}^n
\xi_{i,\pi
}^K-\ex_\pi\bigl[\xi_{1,\pi}^K
\bigr] \Biggr\rrvert \ge\delta/3 \Biggr]
\\
& &{} + \pr_\pi \Biggl[ \Biggl\llvert \frac{1}{n}\sum
_{i=1}^n \xi_{i,\pi
}\eins_{|\xi_{i,\pi}|>K}
\Biggr\rrvert \ge\delta/3 \Biggr]
\\
& &{} + \pr_\pi \bigl[ \ex_\pi\bigl[|\xi_{1,\pi}|
\eins_{|\xi_{1,\pi
}|>K}\bigr]\ge\delta/3 \bigr]
\\
& =: & S_1(\delta,n,K,\pi)+S_2(\delta,n,K,
\pi)+S_3(\delta,K,\pi).
\end{eqnarray*}
By Markov's inequality, $S_2(\delta,n,K,\pi)$ is bounded above by
$3\delta^{-1}\sup_{\pi\in\Pi} \ex_\pi[|\xi_{1,\pi}|\eins
_{|\xi_{1,\pi}|\ge K}]$. So, for given $\varepsilon>0$, one can
choose $K=K_{\varepsilon,\delta}$ in such a way that
\[
\sup_{\pi\in\Pi} S_2(\delta,n,K,\pi) \le\varepsilon/2
\qquad\forall n\in\N,
\]
because we assumed (\ref{probability bound - cond - 1}). By (\ref
{probability bound - cond - 1}), we may and do also assume that
$K=K_{\varepsilon,\delta}$ is chosen such that
\[
\sup_{\pi\in\Pi} S_3(\delta,K,\pi) = 0.
\]
Choosing $x:=n\delta/6$ in Theorem~\ref{Rios Markov inequality}, we
further obtain
\begin{eqnarray*}
\sup_{\pi\in\Pi}S_1(\delta,n,K,\pi) & \le&
\frac{576}{\delta^2} \sup_{\pi\in\Pi} \frac{1}{n}\sum
_{j=0}^{n-1}\int_0^{2\alpha_\pi(j)}
\overline{G}_{K,\pi}^{
\rightarrow}(t)^2 \,\mathrm{d}t
\\
& \le& \frac{1152 K^2}{\delta^2} \sup_{\pi\in\Pi} \frac
{1}{n}\sum
_{j=0}^{n-1}\alpha_\pi(j),
\end{eqnarray*}
where $G_{K,\pi}$ denotes the distribution function of $|\xi_{1,\pi
}^K|$. So, in view of (\ref{probability bound - cond - 2}) and the
Toeplitz lemma, we can find some $n_\varepsilon\in\N$ such that
\[
\sup_{\pi\in\Pi}S_1(\delta,n,K,\pi) \le\varepsilon/2
\qquad \forall n\ge n_\varepsilon.
\]
Thus, (\ref{probability bound - eq}) holds.
\end{pf}

%%%%%%%%%%%%%%%%%%%%%%%%%%%%%%%%%%%%%%%%%%%%%%%%%%%%%%%%%%%%%%%%

%s8 #&#
\section{Strong mixing of linear processes}\label{Strong mixing of
linear processes}

Let $(Z_s)_{s\in\Z}$ be i.i.d. random variables on some probability
space $(\Omega,\mathcal{F},\pr)$, and define the linear process
$X_t:=\sum_{s=0}^\infty a_{s}Z_{t-s}$, $t\in\N$, for any real
sequence $(a_s)_{s\in\N_0}$ for which the latter series is $\pr
$-almost surely well defined for every $t\in\N$. Moreover, let
$\alpha_\pr(n)$ be the $n$th strong mixing coefficient of the
sequence $(X_t)$ under $\pr$ as defined at the beginning of
Section~\ref{appendix rio}. The following criterion for $(X_t)$ to be strongly
mixing is an immediate consequence of Lemmas 2.1 and 2.2 in \cite
{PhamTran1985}; notice that we will assume $a_0=1$.

%th8.1 #&#
\begin{theorem}\label{strong mixing of linear process}
Let $a_0=1$, and assume that the following assertions hold:
\begin{enumerate}[(b)]
\item[(a)] The distribution of $Z_1$ admits a Lebesgue density $f$ for
which $\int|f(y+h)-f(y)| \,\mathrm{d}y<M|h|$ for all $h\in\R$ and some constant $M>0$.
\item[(b)] $\ex[|Z_1|]<\infty$.
\item[(c)] $\sum_{s=0}^\infty a_s z^s\neq0$ for all $z$ with
$|z|\le1$.
\item[(d)] $\sum_{u=1}^\infty\sum_{s=u}^\infty|a_s|<\infty$.
\end{enumerate}
Then, for every $n\in\N$,
\setcounter{equation}{29}
%e30 #&#
\begin{equation}
\label{strong mixing of linear process - eq} \alpha_\pr(n) \le \Biggl(2M \ex\bigl[|Z_1|\bigr]
\sum_{s=0}^\infty|b_s| \Biggr)\sum
_{u=n}^\infty\sum_{s=u}^\infty|a_s|,
\end{equation}
where $b_s$ is the coefficient of $z^s$ in the power series expansion
of $z\mapsto1/\sum_{s=0}^\infty a_sz^s$. In particular, $(X_t)$ is
strongly mixing.
\end{theorem}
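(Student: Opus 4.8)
The plan is to check that the four hypotheses are exactly what is needed to feed Lemmas~2.1 and~2.2 of \cite{PhamTran1985}, and then to combine their outputs; the only points requiring care are the absolute summability of the inverse coefficients $(b_s)$ and the bookkeeping that converts a single perturbation estimate into the iterated tail sum $\sum_{u=n}^\infty\sum_{s=u}^\infty|a_s|$. I begin with the inverse representation. Swapping the order of summation shows that (d) is the same as $\sum_{s\ge 1}s|a_s|<\infty$, so in particular $\sum_s|a_s|<\infty$ and $g(z):=\sum_{s=0}^\infty a_sz^s$ is continuous on the closed unit disk and holomorphic in its interior. By (c) the function $g$ has no zero there, hence $1/g$ is holomorphic on a slightly larger disk and its Taylor coefficients $(b_s)$ decay geometrically; thus $\sum_{s=0}^\infty|b_s|<\infty$ (with $b_0=1$ since $a_0=1$), which is the summability claimed at the end of the statement. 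The same fact yields the invertibility (autoregressive) representation $Z_t=\sum_{s=0}^\infty b_sX_{t-s}$, and therefore $\sigma(X_j:j\le k)=\sigma(Z_j:j\le k)$; this identification is the content of Lemma~2.2 in \cite{PhamTran1985}.

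Next I set up the coupling that drives the mixing bound. Fix $k\ge 1$ and, for $i\ge 0$, split each future coordinate at the lag where the innovation index crosses $k$:
\[
    X_{k+n+i}\,=\,\sum_{s=0}^{n+i-1}a_sZ_{k+n+i-s}\;+\;V_i,\qquad V_i\,:=\,\sum_{s=n+i}^\infty a_sZ_{k+n+i-s}.
\]
The first (``fresh'') summand depends only on innovations with index $>k$, hence is independent of the past $\sigma(X_j:j\le k)$, whereas the remainder $V_i$ is past-measurable by the identification from the previous step and, by (b), satisfies $\ex[|V_i|]\le\ex[|Z_1|]\sum_{s=n+i}^\infty|a_s|$. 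Deleting the remainders $V_i$ thus replaces the future block by one that is independent of the past, and $\alpha_\pr(n)$ is controlled by the change of law so incurred.

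Here I would invoke Lemma~2.1 of \cite{PhamTran1985} to quantify that change in total variation. Because $a_0=1$, the map from innovations to process values is essentially unit-triangular (Jacobian $1$), so the joint density of any finite future block factorizes as a product of copies of $f$ evaluated at the reconstructed innovations $Z_t=\sum_s b_sX_{t-s}$; shifting the process by the vector $(V_i)$ shifts each reconstructed innovation by a $b$-weighted combination of the $V_i$, and the $L^1$-Lipschitz bound (a) turns this into an $L^1$ change of the block density of at most $M\bigl(\sum_s|b_s|\bigr)\sum_i\ex[|V_i|]$. Putting the pieces together gives
\[
    \alpha_\pr(n)\,\le\,2M\Bigl(\sum_{s=0}^\infty|b_s|\Bigr)\sum_{i=0}^\infty\ex[|V_i|]\,\le\,2M\,\ex[|Z_1|]\Bigl(\sum_{s=0}^\infty|b_s|\Bigr)\sum_{i=0}^\infty\sum_{s=n+i}^\infty|a_s|,
\]
and the substitution $u=n+i$ rewrites the final double sum as $\sum_{u=n}^\infty\sum_{s=u}^\infty|a_s|$, which is precisely (\ref{strong mixing of linear process - eq}). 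Since this quantity is the tail of the convergent series in (d), it tends to $0$ as $n\to\infty$, so $(X_t)$ is strongly mixing.

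The main obstacle is Lemma~2.1, i.e.\ establishing that the $L^1$-Lipschitz smoothness of the single innovation density $f$ transfers to a total-variation modulus of continuity for the (infinite-dimensional) future block, with the correct amplification factor $\sum_s|b_s|$ and without losing control of the tails. This is exactly the step in which invertibility~(c) and the density assumption~(a) are used together, and it is what produces the constant $2M\,\ex[|Z_1|]\sum_s|b_s|$; the remaining hypotheses (b) and (d) enter only through the elementary magnitude bound on $V_i$ and the summability of the tails.
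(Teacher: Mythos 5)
Your overall architecture matches the paper's: the paper obtains this theorem directly as a consequence of Lemmas 2.1 and 2.2 of \cite{PhamTran1985}, which are exactly the two inputs you invoke, and your coupling/bookkeeping that converts the tail remainders $V_i$ into the double sum $\sum_{u=n}^\infty\sum_{s=u}^\infty|a_s|$ with constant $2M\,\ex[|Z_1|]\sum_s|b_s|$ is consistent with that. Since the paper itself treats the total-variation estimate as a black box (it is Lemma 2.1 of \cite{PhamTran1985}), your doing the same is not by itself a defect.

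There is, however, a genuine error in your justification of $\sum_s|b_s|<\infty$. From $\sum_s|a_s|<\infty$ (or even $\sum_s s|a_s|<\infty$, which is what (d) gives) you only get that $g(z)=\sum_s a_sz^s$ is holomorphic in the open unit disk and continuous on the closed disk; $g$ need not extend holomorphically to any larger disk, so the claim that ``$1/g$ is holomorphic on a slightly larger disk and its Taylor coefficients decay geometrically'' does not follow. Concretely, take $a_0=1$ and $a_s=c\,s^{-4}$ for $s\ge 1$ with $c>0$ small: condition (d) holds, $|g(z)-1|<1/2$ on the closed disk so (c) holds, yet $g$ is (up to constants) a polylogarithm with a branch point at $z=1$; its radius of convergence is exactly $1$, so neither $g$ nor $1/g$ is holomorphic on any disk of radius greater than $1$, and the $b_s$ need not decay geometrically. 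The correct tool --- and the one the paper explicitly cites --- is Wiener's theorem for the Wiener algebra: if $\sum_s|a_s|<\infty$ and $\sum_s a_sz^s\neq 0$ for all $|z|\le 1$, then $1/\sum_s a_sz^s=\sum_s b_sz^s$ with $\sum_s|b_s|<\infty$ (cf.\ \cite{Wiener1933}, \cite{Yosida1995}). This is not a cosmetic point: the summability of $(b_s)$ is needed both for the autoregressive representation behind your identification $\sigma(X_j:j\le k)=\sigma(Z_j:j\le k)$ and for the finiteness of the constant in the mixing bound, so until it is established by Wiener's theorem (or an equivalent argument) the rest of your proof has no foundation. With that single substitution, your argument aligns with the paper's.
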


It can be seen from Lemma~2.1 in \cite{PhamTran1985} that the
right-hand side in (\ref{strong mixing of linear process - eq})
provides an upper bound even for the $\beta$-mixing coefficient, that
is, for the mixing coefficient in the context of absolute regularity.
%%Notice that (c) ensures that $1/\sum_{s=0}^\infty a_sz^s$ is well
%defined. Also
Notice that (d) implies $\sum_{s=0}^\infty|a_s|<\infty$ which,
together with (c) and Wiener's theorem (cf. \cite{Wiener1933}, page 91
or \cite{Yosida1995}, page 301), ensures that $\sum_{s=0}^\infty
|b_s|<\infty$. Further conditions for a linear process to be strongly
mixing can be found in \cite{Gorodetskii1977,PhamTran1985,Withers1981}.

%re8.2 #&#
\begin{remarknorm}\label{strong mixing of linear process - remark}
(i) Condition (d) is satisfied if $|a_s|\le C s^{-\gamma}$ for all
$s\in\N_0$ and some constants $C>0$, $\gamma>2$. In this case, $\sum_{u=n}^\infty\sum_{s=u}^\infty|a_s|\le C_\gamma n^{2-\gamma}$ holds
for all $n\in\N$.

(ii) Condition (d) is also satisfied if $|a_s|\le C q^{s}$ for all
$s\in\N_0$ and some constants $C>0$, $q\in(0,1)$. In this case,
$\sum_{u=n}^\infty\sum_{s=u}^\infty|a_s|\le C(1-q)^{-2}q^n$ holds
for all $n\in\N$.
%{ $\Diamond$}
\end{remarknorm}

%ex8.3 #&#
\begin{examplenorm}\label{strong mixing of linear process - example}
If $a_s=a q^{s}$, $s\ge1$, with $a\neq0$ and $|q|<1$, then we have
for $|z|\le1$
\[
\sum_{s=0}^\infty a_sz^s
= (a_0-a)+\frac{a}{1-qz} = \frac{a_0-\{
(a_0-a)q\}z}{1-qz}.
\]
From here one easily derives that $1/\sum_{s=0}^\infty a_sz^s$ admits
the representation $\sum_{s=0}^\infty b_sz^s$ with $b_0=1/a_0$ and
$b_s=-a(a_0-a)^{s-1}a_0^{-(s+1)}q^s$, $s\ge1$, using the convention $0^0:=1$.

If specifically $a_0=1$, $a=(\phi_1+\theta_1)/\phi_1$ and $q=\phi
_1$ for real numbers $\phi_1$ and $\theta_1$ satisfying $0<|\phi
_1|<1$ and $|\theta_1|<1$, then $\sum_{s=0}^\infty|b_s|=1+|\phi
_1+\theta_1|/(1-|\theta_1|)$.
%{ $\Diamond$}
\end{examplenorm}

%ex8.4 #&#
\begin{examplenorm}[(ARMA process)]\label{ARMA process - appendix}
To illustrate conditions (c)--(d) in Theorem~\ref{strong mixing of
linear process},
%and Remark~\ref{strong mixing of linear process -
%remark},
let us consider an $\operatorname{ARMA}(p,q)$ process $X_t=\sum_{s=1}^p\phi_s
X_{t-s} + \sum_{s=0}^q \theta_s Z_{t-s}$
with $\theta_0=1$ and square-integrable and centered i.i.d.
innovations $(Z_s)_{s\in\Z}$. Define the characteristic polynomials
$\phi(z):=1-\sum_{s=1}^p\phi_s z^s$ and $\theta(z):=\sum_{s=0}^q
\theta_s z^s$, and assume that $\phi(\cdot)$ and $\theta(\cdot)$
have no common zeros. Further assume that $(X_t)$ is both causal and
invertible. By the causality we have that $\phi(z)\neq0$ for all
complex $z$ with $|z|\le1$, and that $X$ admits the $\operatorname{MA}(\infty)$
representation $X_t=\sum_{s=0}^\infty a_{s}Z_{t-s}$ with the
coefficients $a_s$ determined by
%
%e31 #&#
\begin{equation}
\label{eq for infinute polynomial a} \frac{\theta(z)}{\phi(z)} = a(z) := \sum_{s=0}^\infty
a_s z^s;
\end{equation}
see \cite{BrockwellDavis2006}, Theorem~3.1.1. By the invertibility we
have in addition that $\theta(z)\neq 0$, and hence $a(z)\neq0$,
for all complex $z$ with $|z|\le1$; see \cite{BrockwellDavis2006}, Theorem~3.1.2. That is, under the imposed assumptions the
ARMA process can be regarded as a linear process satisfying condition
(c) in Theorem~\ref{strong mixing of linear process}.

If specifically $p=q=1$, $|\phi_1|,|\theta_1|<1$, and $\phi_1\neq\theta_1$, then the coefficients $a_s$ determined by (\ref{eq for
infinute polynomial a}) read as $a_0=1$ and $a_s=(\phi_1+\theta
_1)\phi_1^{s-1}$ ($=((\phi_1+\theta_1)/\phi_1)\phi_1^{s}$ if $\phi
_1\neq0$), $s\ge1$. In this case, the $\operatorname{ARMA}(1,1)$ process can be
seen as a linear process which satisfies not only condition (c) but
also condition (d) of Theorem~\ref{strong mixing of linear process}.
By part (ii) of Remark~\ref{strong mixing of linear process - remark},
we have in particular $\sum_{u=n}^\infty\sum_{s=u}^\infty
|a_s|=(|\phi_1+\theta_1|/(1-|\phi_1|)^{2}) |\phi_1|^{n-1}$ for all\vspace*{2pt}
$n\in\N$. Moreover, Example~\ref{strong mixing of linear process -
example} yields $\sum_{s=0}^\infty|b_s|=1+|\phi_1+\theta
_1|/(1-|\theta_1|)$, where $b_s$ is the coefficient of $z^s$ in the
power series expansion of $z\mapsto1/\sum_{s=0}^\infty a_sz^s$.
%{ $\Diamond$}
\end{examplenorm}
\end{appendix}

% zodis "Acknowledgments" paliekamas pagal autoriu
\section*{Acknowledgements}

The author thanks the editor and the reviewers for their constructive
suggestions and helpful comments.

%\begin{supplement}%[id=suppA]
%\sname{Supplement A}
%\stitle{}
%\slink[doi]{10.3150/00-BEJXXXXSUPP} %[doi,text={...}] - jei reikia
%suskaldyti doi
%\sdatatype{.pdf}
%\sfilename{BEJ000\_supp.pdf}
%\sdescription{}
%\end{supplement}

%\bibitem[\protect\citeauthoryear{}{()}]{r1}
%\bibitem{r1}
% imsref loaded by audrone.aklyte, 2014-04-08 08:53:50
%

\printhistory

\end{document}